\newtheorem{theorem}{Theorem}[section]
\newtheorem{lemma}[theorem]{Lemma}
\newtheorem{corollary}[theorem]{Corollary}
\theoremstyle{definition}
\newtheorem{example}[theorem]{Example}
\theoremstyle{remark}
\newtheorem{remark}[theorem]{Remark}
\numberwithin{equation}{section}
\begin{document}

\title [Reverses of Ando's and  H\"{o}lder--Macarty's inequalities]{Reverses of Ando's and  H\"{o}lder--Macarty's inequalities}

\author[ M. Hajmohamadi, R. Lashkaripour,  M. Bakherad ]{ M. Hajmohamadi$^1$, R. Lashkaripour$^2$ and M. Bakherad$^3$}

\address{$^1$$^{,2}$$^{,3}$ Department of Mathematics, Faculty of Mathematics, University of Sistan and Baluchestan, Zahedan, I.R.Iran.}

\email{$^{1}$monire.hajmohamadi@yahoo.com}
\email{$^2$lashkari@hamoon.usb.ac.ir}
\email{$^{3}$mojtaba.bakherad@yahoo.com; bakherad@member.ams.org}

\subjclass[2010]{Primary 47A63,  Secondary  47A64, 47A60}

\keywords{Positive operator; Young inequality; H\"{o}lder-McCarthy's inequality; Ando's inequality.}
\begin{abstract}
In this paper, we give some reverse-types of Ando's and H\"{o}lder-McCarthy's inequalities for positive linear maps, and positive invertible operators. For our purpose, we use a recently improved Young inequality and its reverse.
\end{abstract} \maketitle
\section{Introduction and preliminaries}
\noindent Let $\mathcal{B}(\mathcal{H})$ be the $C^*$-algebra of all bounded linear operators on  a complex Hilbert space $\mathcal{H}$ with the operator norm $\|\cdot\|$ and the identity $I$ also $\mathcal{M}_{n}(\mathcal{C})$ denotes the space of all $n\times n$ complex matrices. For an operator $A \in
\mathcal{B}(\mathcal{H})$, we write $A\geq 0$ if $A$ is positive, and
$A>0$ if $A$ is positive invertible. For $A, B \in \mathcal{B}(\mathcal{H})$, we say $A\geq B$ if $A-B\geq0$. The Gelfand map $f(t)\mapsto f(A)$ is an
isometrical $*$-isomorphism between the $C^*$-algebra
$C({\rm sp}(A))$ of continuous functions on the spectrum ${\rm sp}(A)$
of a selfadjoint operator $A$ and the $C^*$-algebra generated by $A$ and $I$. A linear map $\Phi$ on $\mathcal{B}(\mathcal{H})$ is positive if $\Phi(A)\geq0$ whenever $A\geq0$. It is said to be unital if $\Phi(I)=I$. A continuous function $f:J\rightarrow \mathcal{R}$ is operator concave if
\begin{align*}
f(\alpha A+(1-\alpha)B) \geq \alpha f(A)+(1-\alpha)f(B)
\end{align*}
 for all selfadjoint $A, B\in \mathcal{B}(\mathcal{H})$ with spectra in $J$ and all $\alpha\in [0,1]$.\\
 The well-known Young's inequality says that for positive real numbers $a, b$ and $0\leq t \leq1$, we have $a^{t}b^{1-t}\leq t a+(1-t)b$. Refinements and reverses of this inequality  are proven in \cite{bakh, dr.lashkari, sa, sab1, sab} and  references therein. Also F. Kittaneh et al. in \cite{kit} obtained the following improvement of the Young inequality, for any positive definite matrices $A,B\in\mathcal{M}_{n}(\mathcal{C})$
 {\footnotesize\begin{align}\label{kitt}
 A^{1-t}B^{t}+r(A+B-2A\sharp B)\leq(1-t)A+t B\leq A^{1-t}B^{t}+R(A+B-2A\sharp B),
 \end{align}}
 where $t\in [0,1], r=\min\{t,1-t\}$ and $R=\max\{t,1-t\}$.\\
 Zhao et al. \cite{zh} obtained a refinement of Young's inequality and its reverse as follows:\\
 $({\rm i})$ for $0< t\leq\frac{1}{2}$,
 \begin{align}\label{ww}
 r_{0}(\sqrt[4]{ab}-\sqrt{a})^{2}+r(\sqrt{a}-\sqrt{b})^{2}&+a^{1-t}b^{t}\leq(1-t)a+tb\nonumber\\&\leq R(\sqrt{a}-\sqrt{b})^{2}-r_{0}(\sqrt[4]{ab}-\sqrt{b})^{2}+a^{1-t}b^{t}
 \end{align}
 $({\rm ii})$ for $\frac{1}{2}<t<1$,
 \begin{align*}
 r_{0}(\sqrt[4]{ab}-\sqrt{b})^{2}+R(\sqrt{a}-\sqrt{b})^{2}&+a^{1-t}b^{t}\leq(1-t)a+tb\nonumber\\&\leq r(\sqrt{a}-\sqrt{b})^{2}-r_{0}(\sqrt[4]{ab}-\sqrt{a})^{2}+a^{1-t}b^{t},
 \end{align*}
 where $r=\min\{t,1-t\}, R=\max\{t,1-t\}$ and $r_{0}=\min\{2r,1-2r\}$.\\
Sababheh et al.  \cite{sa, sab}   established  some refinements and reverses of  Young's inequality as follows:\\
$({\rm i})$  for $0\leq t\leq\frac{1}{2}$,
\begin{align}\label{6}
S_{N}(t;a,b)\leq t a+(1-t)b-a^t b^{1-t}\leq (1-t)(\sqrt{a}-\sqrt{b})^2-S_{N}(2t;\sqrt{ab},a)
\end{align}
$({\rm ii})$ for $\frac{1}{2}\leq t\leq1$,
\begin{align*}
S_{N}(t;a,b)\leq t a+(1-t)b-a^t b^{1-t}\leq t(\sqrt{a}-\sqrt{b})^2-S_{N}(2-2t;\sqrt{ab},b),
\end{align*}
where \\
{\footnotesize\begin{align*}
S_{N}(t;a,b)=\sum_{j=1}^{N}s_{j}(t)\Big(\sqrt[2^{j}]{b^{2^{j-1}-k_{j}}a^{k_{j}}}-
\sqrt[2^{j}]{a^{k_{j}+1}b^{2^{j-1}-k_{j}-1}}\Big)^2,
\end{align*}}
 $s_{j}(t)=\Big((-1)^{r_{j}}2^{j-1}t+(-1)^{r_{j}+1}[\frac{r_{j}+1}{2}]\Big)$, $r_j=[2^{j}t]$ and $k_{j}=[2^{j-1}t]$. Here $[x]$ is the greatest integer less than or equal to $x$.\\
Let $A, B\in\mathcal{B}(\mathcal{H})$ be positive. The operator t-weighted arithmetic, geometric, and harmonic means of operators $A, B$ are defined by $A\nabla_{t}B=(1-t)A+tB$, $A\sharp_{t}B=A^{\frac{1}{2}}(A^{-\frac{1}{2}}BA^{-\frac{1}{2}})^{t}A^{\frac{1}{2}}$ and $A!_{t}B=((1-t)A^{-1}+tB^{-1})^{-1}$ respectively. In particular, for $t =\frac{1}{2}$
we get the usual operator arithmetic mean $\nabla$, the geometric mean $\sharp$ and the harmonic mean $!$.\\
\section*{Results and discussion}
For positive real numbers $a_i$ and $b_i\,\,(i=1,2,\dots,n)$  the H\"{o}lder inequality states  that
\begin{align}\label{ho}
\sum_{i=1}^{n}a_{i}^{1/p}b_{i}^{1/q}\leq\left(\sum_{i=1}^{n}a_{i}\right)^{1/p}\left(\sum_{i=1}^{n}b_{i}\right)^{1/q}
\end{align}
for $p,q>1$ such that $\frac{1}{p}+\frac{1}{q}=1$. If $p=q=2$ in \eqref{ho}, then we get the Cauchy-Schwarz inequality.
The H\"{o}lder inequality for positive operators  $A_i$ and $B_i\,\,(i=1,2,\dots,n)$ is
\begin{align*}
\sum_{i=1}^{n}A_{i}\sharp_{t}B_{i}\leq\left(\sum_{i=1}^{n}A_{i}\right)\sharp_{t}\left(\sum_{i=1}^{n}B_{i}\right),
\end{align*}
where $0\leq t\leq1$. In the case $t=\frac{1}{2}$, we get the operator Cauchy-Schwarz inequality. For further information about  the H\"{o}lder and Cauchy-Schwarz inequalities   we refer the
reader to \cite{bakh2, Bha,  Fuj, mm, Lee, zh2} and references therein.
Ando \cite{And} proved that if $\Phi$ is a positive linear map, then for positive operators $A, B\in\mathcal{B}(\mathcal{H})$ and $t\in[0,1]$, we have
\begin{align*}
\Phi(A\sharp_t B)\leq\Phi(A)\sharp_t\Phi(B).
\end{align*}
 Recently, some authors expressed several reverse-types  Ando's inequality (see \cite{se,seo}).
 \\H\"{o}lder-McCarthy's inequality says  that for any positive operator $A$ and any unit vector $x\in \mathcal H$, we have
\begin{align}\label{mc1}
\langle A^{t}x,x\rangle\leq\langle Ax,x\rangle^{t}, \qquad\qquad 0\leq t\leq1.
\end{align}
Furuta \cite{fur} showed that this inequality is equivalent to Young's inequality.\\
\section*{Conclutions}
In this paper, we establish a  reverse of Ando's inequality for positive(non-unital)linear maps  and  positive definite matrices by using an inequality due to Sababheh. We obtain some reverses of matrix H\"{o}lder and Cauchy-Schwarz inequalities and a reverse of inequality \eqref{mc1} for $t\in (0,\frac{1}{2}]$ as following:
{\footnotesize\begin{align*}
\langle Tx,x&\rangle^{t}-\langle T^{t}x,x\rangle\\&
\leq2R\left(\langle Tx,x\rangle^\frac{1}{2}-\langle T^\frac{1}{2}x,x\rangle\right)-r_{0}\left(\langle T^\frac{1}{2}x,x\rangle+
\langle Tx,x\rangle^\frac{1}{2}-2\langle T^\frac{1}{4}x,x\rangle\langle Tx,x\rangle^\frac{1}{4}\right).
\end{align*}}
\section*{Methods}
We use  the properties of inner product and the inequalities obtained in \cite{sab} and \cite{zh}.
\section{Main results}
\bigskip To prove our first result, we need the following lemmas.
\begin{lemma}\cite{sab}\label{sab1k}
 Let $A, B\in\mathcal{M}_{n}(\mathcal{C})$ be positive definite matrices and $t\in [0,1]$. Then
\begin{align}\label{a2}
\sum_{j=1}^{N}s_{j}(t)\left(A\sharp_{\alpha_{j}(t)}B+A\sharp_{2^{1-j}+\alpha_{j}(t)}B-2A\sharp_{2^{-j}+\alpha_{j}(t)}B\right)+A\sharp_{t}B\leq A\nabla_{t}B,
\end{align}
where $\alpha_{j}(t)=\frac{k_{j}(t)}{2^{j-1}}$.
\end{lemma}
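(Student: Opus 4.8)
The plan is to derive the scalar version of \eqref{a2} from the Sababheh refinement \eqref{6} (more precisely, from the $N$-term refinement encoded by $S_N(t;a,b)$) and then to lift it to positive definite matrices by the standard functional calculus argument. First I would make the reduction to scalars: since $A$ and $B$ are positive definite, write $X=A^{-1/2}BA^{-1/2}>0$, so that $A\sharp_s B=A^{1/2}X^s A^{1/2}$ and $A\nabla_t B=A^{1/2}\big((1-t)I+tX\big)A^{1/2}$. Conjugating the desired inequality by $A^{-1/2}$ on both sides reduces \eqref{a2} to the operator inequality
\begin{align*}
\sum_{j=1}^{N}s_{j}(t)\Big(X^{\alpha_j(t)}+X^{2^{1-j}+\alpha_j(t)}-2X^{2^{-j}+\alpha_j(t)}\Big)+X^{t}\leq (1-t)I+tX ,
\end{align*}
and by the spectral theorem this holds for every $X>0$ as soon as the corresponding scalar inequality holds for every $x>0$, namely
\begin{align*}
\sum_{j=1}^{N}s_{j}(t)\Big(x^{\alpha_j(t)}+x^{2^{1-j}+\alpha_j(t)}-2x^{2^{-j}+\alpha_j(t)}\Big)+x^{t}\leq (1-t)+tx .
\end{align*}

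Next I would identify the scalar inequality with \eqref{6}. Putting $b=1$ and $a=x$ in \eqref{6} (and recalling that $S_N$ is symmetric in the sense needed, so the roles of the variables can be matched to $(1-t)x+t$ versus $x^{1-t}$), the left inequality of \eqref{6} reads
\begin{align*}
S_N(t;x,1)\leq (1-t)x+t-x^{1-t},\qquad S_N(t;x,1)=\sum_{j=1}^N s_j(t)\Big(x^{(2^{j-1}-k_j)/2^j}-x^{(k_j+1)/2^j}\Big)^2 .
\end{align*}
Expanding the square $\big(x^{(2^{j-1}-k_j)/2^j}-x^{(k_j+1)/2^j}\big)^2 = x^{(2^{j-1}-k_j)/2^{j-1}} + x^{(k_j+1)/2^{j-1}} - 2x^{(2^{j-1}+1)/2^j}$ and using $\alpha_j(t)=k_j/2^{j-1}$, a direct bookkeeping check shows each exponent matches one of $\alpha_j(t)$, $2^{1-j}+\alpha_j(t)$, $2^{-j}+\alpha_j(t)$ after the obvious substitution $t\mapsto 1-t$ (equivalently $x\mapsto 1/x$ together with multiplication by $x$); since \eqref{a2} is symmetric under $A\leftrightarrow B$, $t\leftrightarrow 1-t$, this is harmless. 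So \eqref{a2} is, termwise, exactly the left-hand inequality of \eqref{6} applied after the scalar reduction.

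Finally I would assemble the argument: invoke \eqref{6} (Sababheh's refinement) for the scalar inequality, promote it to $X>0$ via the Gelfand/functional-calculus map described in the preliminaries, and conjugate back by $A^{1/2}$ to recover \eqref{a2}; the coefficients $s_j(t)$ are the same in both formulas, and $\alpha_j(t)=k_j(t)/2^{j-1}$ is just the renaming of the exponents $k_j/2^{j-1}$ appearing inside $S_N$. The main obstacle I anticipate is purely the indexing: verifying that the three exponents $\alpha_j(t)$, $2^{1-j}+\alpha_j(t)$, $2^{-j}+\alpha_j(t)$ in \eqref{a2} correspond precisely, term by term, to the expansion of the single squared difference in $S_N(t;a,b)$ — in particular matching $[2^{j}t]$ and $[2^{j-1}t]$ correctly and keeping track of which variable plays which role after the symmetry swap. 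Once that combinatorial identification is pinned down, the analytic content is entirely contained in \eqref{6}, and the operator statement follows automatically by the standard reduction to commuting (indeed scalar) arguments.
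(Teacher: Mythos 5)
Your proposal is essentially the paper's own justification: the lemma is quoted from Sababheh--Moslehian, and the paper's accompanying remark extends it to operators precisely by the functional-calculus/conjugation-by-$A^{1/2}$ reduction you describe, with the analytic content residing in the scalar refinement \eqref{6}. Your term-by-term identification is correct in substance (expanding the square in $S_N$ does produce the exponents $\alpha_j(t)$, $2^{1-j}+\alpha_j(t)$, $2^{-j}+\alpha_j(t)$), though the matching is achieved simply by interchanging the roles of $a$ and $b$ in \eqref{6} while keeping $t$ fixed --- since $ta+(1-t)b-a^tb^{1-t}$ becomes $a\nabla_t b-a\sharp_t b$ under that swap --- rather than by the substitution $t\mapsto 1-t$ you invoke.
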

For $N=2$, we have the following lemma that is shown in \cite{zh} for positive invertible operators.
\begin{lemma}\label{001}
Let $A, B\in\mathcal{B}(\mathcal{H})$ be positive invertible operators and $t\in [0,1]$. Then\\
 $(\rm i)$ If $0<t\leq\frac{1}{2}$, then
 \begin{align}\label{a2}
r_{0}(A\sharp B-2A\sharp_{\frac{1}{4}}B+A)+2t(A\nabla B-A\sharp B)+A\sharp_{t}B\leq A\nabla_{t}B.
 \end{align}
 $(\rm ii)$ If $\frac{1}{2}<t<1$, then
 \begin{align}\label{a3}
 r_{0}(A\sharp B-2A\sharp_{\frac{3}{4}}B+B)+2(1-t)(A\nabla B-A\sharp B)+A\sharp_{t}B\leq A\nabla_{t}B,
 \end{align}
 where $r=\min\{\nu,1-\nu\}$ and $r_{0}=\min\{2r,1-2r\}$.
\end{lemma}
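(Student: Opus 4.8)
The plan is to deduce $({\rm i})$ and $({\rm ii})$ from the scalar refinement \eqref{ww} of Young's inequality (and its counterpart for $\tfrac12<t<1$, both from \cite{zh}) by the standard two-step device: a congruence normalization to $A=I$, followed by the continuous functional calculus. The engine is that every operator mean appearing in the two displays can be written through the single positive invertible operator
\begin{align*}
C:=A^{-\frac12}BA^{-\frac12},
\end{align*}
via the identities $A\sharp_{\alpha}B=A^{\frac12}C^{\alpha}A^{\frac12}$, $A\nabla_{\alpha}B=A^{\frac12}\big((1-\alpha)I+\alpha C\big)A^{\frac12}$ and $A=A^{\frac12}IA^{\frac12}$. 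Since $X\mapsto A^{\frac12}XA^{\frac12}$ is an order isomorphism of $\mathcal{B}(\mathcal{H})$ (with inverse $X\mapsto A^{-\frac12}XA^{-\frac12}$), inequality $({\rm i})$ is equivalent to
\begin{align*}
r_{0}\big(C^{\frac12}-2C^{\frac14}+I\big)+2t\Big(\tfrac{I+C}{2}-C^{\frac12}\Big)+C^{t}\le(1-t)I+tC,
\end{align*}
while $({\rm ii})$ is equivalent to the same statement with $C^{\frac14}$ replaced by $C^{\frac34}$, with the summand $I$ in the first bracket replaced by $C$, and with $2t$ replaced by $2(1-t)$.

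The second step is to note that each side of these reduced inequalities is a fixed continuous real function of $C$ alone; hence, by the Gelfand map, it suffices to verify the corresponding pointwise inequality on $\mathrm{sp}(C)\subset(0,\infty)$. Using the elementary identities $C^{\frac12}-2C^{\frac14}+I=(C^{1/4}-I)^{2}$, $C^{\frac12}-2C^{\frac34}+C=(C^{1/4}-C^{1/2})^{2}$ and $\tfrac{I+C}{2}-C^{\frac12}=\tfrac12(C^{1/2}-I)^{2}$, the pointwise form of $({\rm i})$ becomes
\begin{align*}
r_{0}\big(s^{1/4}-1\big)^{2}+t\big(s^{1/2}-1\big)^{2}+s^{t}\le(1-t)+ts\qquad(s>0),
\end{align*}
which is precisely the left-hand inequality in \eqref{ww} evaluated at $a=1$, $b=s$ (recall $r=\min\{t,1-t\}=t$ for $0<t\le\tfrac12$, so $2t\cdot\tfrac12=t=r$). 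In the same way the pointwise form of $({\rm ii})$ is the $\tfrac12<t<1$ companion of \eqref{ww} at $a=1$, $b=s$, where the middle coefficient $2(1-t)\cdot\tfrac12=1-t$ is again $r=\min\{t,1-t\}$, in agreement with the term $r(A+B-2A\sharp B)=2r(A\nabla B-A\sharp B)$ of \eqref{kitt}. The ranges $0<t\le\tfrac12$, $\tfrac12<t<1$ and the constant $r_{0}=\min\{2r,1-2r\}$ are inherited verbatim from those scalar estimates.

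There is essentially no analytic obstacle: once $A$ has been normalized to the identity, the spectral theorem converts the problem into the already-proved scalar inequality \eqref{ww}. The only point that genuinely needs care is the bookkeeping --- confirming that each operator term is matched with its correct scalar function and that the $\min$ and $\max$ values land correctly in the two regimes $0<t\le\tfrac12$ and $\tfrac12<t<1$, in particular reconciling the coefficient in the second Zhao estimate with the term $2(1-t)(A\nabla B-A\sharp B)$ used here. So the ``hard part'' is really just this verification of the coefficients.
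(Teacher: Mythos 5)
The paper does not actually prove this lemma; it simply attributes it to \cite{zh} (``is shown in \cite{zh} for positive invertible operators''). Your argument --- congruence by $A^{-1/2}$ to reduce everything to functions of $C=A^{-1/2}BA^{-1/2}$, then the Gelfand/functional calculus to reduce to the scalar estimate \eqref{ww} at $a=1$, $b=s$ --- is the standard and correct way to supply that missing proof, and your identities and the matching of terms in part $(\rm i)$ are all right. The one place you have not actually closed the argument is the coefficient in part $(\rm ii)$: after reduction, the middle term of the operator inequality carries the coefficient $2(1-t)\cdot\tfrac12=1-t=r$, whereas the $\tfrac12<t<1$ companion of \eqref{ww} (as quoted in the paper) carries $R=\max\{t,1-t\}=t$ on the term $(\sqrt a-\sqrt b)^2$. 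These are not ``in agreement,'' and the appeal to \eqref{kitt} does not reconcile them, since \eqref{kitt} is a different (weaker) estimate without the $r_0$ term. What saves the lemma is simply that $(\sqrt a-\sqrt b)^2\ge 0$ and $r\le R$, so the scalar inequality with coefficient $R$ implies the one with coefficient $r$, and hence the stated operator inequality (with a non-optimal constant) follows a fortiori. You should state that one line explicitly rather than assert that the coefficients coincide; with that sentence added, the proof is complete.
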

\begin{lemma}\cite{sab}\label{sab2k}
Let $A, B\in\mathcal{M}_{n}(\mathcal{C})$ be positive definite matrices and $t\in [0,1]$.\\
 $(\rm i)$ If $0\leq t\leq\frac{1}{2}$, then
 {\footnotesize\begin{align}\label{a4}
A\nabla_{t}B\leq &A\sharp_{t}B+2(1-t)(A\nabla B-A\sharp B)\nonumber\\&-\sum_{j=1}^{N}s_{j}(2t)\left(A\sharp_{1-\beta_{j}(t)}B+A\sharp_{1+2^{-j}-\beta_{j}(t)}B-2A\sharp_{1-2^{-j-1}-\beta_{j}(t)}B\right).
 \end{align}}
 $(\rm ii)$ If $\frac{1}{2}\leq t\leq1$, then
  {\footnotesize\begin{align*}
 A\nabla_{t}B&\leq A\sharp_{t}B+2t(A\nabla B-A\sharp B)\\&-\sum_{j=1}^{N}s_{j}(2-2t)\left(A\sharp_{\gamma_{j}(t)}B+A\sharp_{\gamma_{j}(t)+2^{1-j}}B-2A\sharp_{\gamma_{j}(t)+2^{-j}}B\right),
 \end{align*}}
where $\beta_{j}(t)=2^{-j}k_{j}(2t)$ and $\gamma_{j}(t)=2^{1-j}k_{j}(2-2t)$.
\end{lemma}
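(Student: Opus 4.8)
The plan is to transfer the scalar refinement/reverse \eqref{6}, together with its companion for $\frac{1}{2}\le t\le 1$, to operators by writing every term as a function of a single positive operator via the continuous functional calculus. Fix positive definite $A,B\in\mathcal{M}_{n}(\mathcal{C})$ and set $X:=A^{-1/2}BA^{-1/2}>0$. For every real $s$ one has
$$A\nabla_{s}B=A^{1/2}\big((1-s)I+sX\big)A^{1/2},\qquad A\sharp_{s}B=A^{1/2}X^{s}A^{1/2},\qquad A\nabla B=A^{1/2}\frac{I+X}{2}A^{1/2},\qquad A\sharp B=A^{1/2}X^{1/2}A^{1/2};$$
in particular the weighted geometric means $A\sharp_{\alpha}B=A^{1/2}X^{\alpha}A^{1/2}$ appearing on the right of \eqref{a4} are well defined for the (possibly $>1$) exponents $\alpha=1-\beta_{j}(t)$, $1+2^{-j}-\beta_{j}(t)$ and $1-2^{-j-1}-\beta_{j}(t)$, since $X>0$.

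Because the congruence $Y\mapsto A^{1/2}YA^{1/2}$ is a linear order isomorphism of the selfadjoint matrices, with order-preserving inverse $Y\mapsto A^{-1/2}YA^{-1/2}$, inequality \eqref{a4} is equivalent to the one obtained by removing the outer $A^{1/2}(\cdot)A^{1/2}$, i.e.\ to an inequality between two continuous functions of $X$ alone. By the Gelfand map it then suffices to verify the corresponding scalar inequality at each $x\in{\rm sp}(X)\subseteq(0,\infty)$, namely
$$1-t+tx\le x^{t}+2(1-t)\Big(\frac{1+x}{2}-x^{1/2}\Big)-\sum_{j=1}^{N}s_{j}(2t)\Big(x^{1-\beta_{j}(t)}+x^{1+2^{-j}-\beta_{j}(t)}-2x^{1-2^{-j-1}-\beta_{j}(t)}\Big).$$
Since \eqref{6} is homogeneous of degree one, this is exactly \eqref{6} evaluated at $a=x$, $b=1$: there $ta+(1-t)b-a^{t}b^{1-t}=1-t+tx-x^{t}$, $(1-t)(\sqrt{a}-\sqrt{b})^{2}=(1-t)(\sqrt{x}-1)^{2}=2(1-t)\big(\frac{1+x}{2}-x^{1/2}\big)$, and $S_{N}(2t;\sqrt{ab},a)=S_{N}(2t;x^{1/2},x)$, whose $j$-th summand, after squaring $\sqrt[2^{j}]{b^{2^{j-1}-k_{j}}a^{k_{j}}}-\sqrt[2^{j}]{a^{k_{j}+1}b^{2^{j-1}-k_{j}-1}}$ and inserting $\beta_{j}(t)=2^{-j}k_{j}(2t)$, equals the bracketed combination of powers of $x$. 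Part $(\rm ii)$ is handled in the same way: one transfers the companion of \eqref{6} for $\frac{1}{2}\le t\le 1$, evaluated at $a=x$, $b=1$, and uses $\gamma_{j}(t)=2^{1-j}k_{j}(2-2t)$ to identify $S_{N}(2-2t;\sqrt{ab},b)$ with $\sum_{j}s_{j}(2-2t)\big(x^{\gamma_{j}(t)}+x^{\gamma_{j}(t)+2^{1-j}}-2x^{\gamma_{j}(t)+2^{-j}}\big)$.

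The step I expect to be the main obstacle is precisely this exponent bookkeeping: expanding each squared $2^{j}$-th root in $S_{N}$, remembering that the integers $r_{j}=[2^{j}t]$ and $k_{j}=[2^{j-1}t]$ must be re-read with the \emph{doubled} first argument $2t$ (respectively $2-2t$) that enters $S_{N}$, and then checking that the three resulting exponents coincide exactly with the weights of the operator geometric means written in \eqref{a4} and in part $(\rm ii)$. Once they do, nothing is left to prove, because the pointwise inequality is literally \eqref{6}; in particular no operator convexity or operator monotonicity beyond the scalar statement is used, which is why positive definiteness of $A$ and $B$ suffices. An alternative, in the spirit of \cite{sab}, would instead combine the operator refinement (Lemma~\ref{sab1k}, at a doubled parameter) with an arithmetic--geometric mean step of the form $A\sharp_{t}B+A\sharp_{1-t}B\ge 2A\sharp B$, avoiding the explicit passage through scalars.
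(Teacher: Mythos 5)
The paper contains no proof of this lemma: it is quoted from \cite{sab} for matrices, and the Remark that follows merely asserts that functional calculus extends it to positive invertible operators. Your route --- congruence by $A^{1/2}$, reduction to a scalar inequality on ${\rm sp}(A^{-1/2}BA^{-1/2})$ via the Gelfand map, and identification of that scalar inequality with \eqref{6} at $a=x$, $b=1$ using homogeneity --- is exactly the transfer the authors have in mind, and every structural step of it (order preservation of congruence, $A\sharp_{s}B=A^{1/2}X^{s}A^{1/2}$, passage to the spectrum) is sound. Your matching of the first three terms ($A\nabla_{t}B$, $A\sharp_{t}B$, $2(1-t)(A\nabla B-A\sharp B)$) is also correct.

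The gap is precisely the step you defer and declare to be routine: the exponent bookkeeping does not produce the statement as printed. With $k_{j}=k_{j}(2t)$ and $\beta_{j}(t)=2^{-j}k_{j}(2t)$, the $j$-th summand of $S_{N}(2t;\sqrt{ab},a)$ at $a=x$, $b=1$ is
\begin{align*}
s_j(2t)\Bigl(\bigl(x^{2^{j-1}-k_j}x^{k_j/2}\bigr)^{2^{-j}}-\bigl(x^{(k_j+1)/2}x^{2^{j-1}-k_j-1}\bigr)^{2^{-j}}\Bigr)^{2}
&=s_j(2t)\bigl(x^{(1-\beta_j)/2}-x^{(1-2^{-j}-\beta_j)/2}\bigr)^{2}\\
&=s_j(2t)\bigl(x^{1-\beta_j}+x^{1-2^{-j}-\beta_j}-2x^{1-2^{-j-1}-\beta_j}\bigr),
\end{align*}
so the middle weight is $1-2^{-j}-\beta_{j}(t)$, not the $1+2^{-j}-\beta_{j}(t)$ appearing in \eqref{a4}. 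This is not cosmetic: with the printed weights the cross exponent $1-2^{-j-1}-\beta_{j}$ is not the average of the other two (that average would be $1+2^{-j-1}-\beta_{j}$), so the printed bracket is not of the form $(u-v)^{2}$ and the printed inequality does not follow from \eqref{6}. Compare Lemma \ref{sab1k}, where the weights $\alpha_{j}$, $\alpha_{j}+2^{1-j}$, $\alpha_{j}+2^{-j}$ do form a perfect square and your substitution reproduces them exactly. Your argument therefore proves the version of \eqref{a4} with $A\sharp_{1-2^{-j}-\beta_{j}(t)}B$ in the middle slot (almost certainly what \cite{sab} contains, the plus sign being a typo here), but as written it does not establish the statement you were asked to prove, and you assert the exponents ``coincide exactly'' without having checked it. Either correct the statement and prove that, or acknowledge that the printed version requires a separate (and, I believe, unavailable) argument; the same verification must be run on the weights $\gamma_{j}(t)$, $\gamma_{j}(t)+2^{1-j}$, $\gamma_{j}(t)+2^{-j}$ in part $(\rm ii)$, which, unlike part $(\rm i)$, do pass the perfect-square test.
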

\begin{remark}
 By using functional calculus and numerical inequalities in \cite{kit, sab}, we can extend inequality \eqref{kitt}, Lemmas \ref{sab1k} and \ref{sab2k} for positive invertible operators.
\end{remark}
For $N=2$, we have the following lemma that is shown in \cite{zh} for positive invertible operators.
\begin{lemma}\label{002}
Let $A, B\in\mathcal{B}(\mathcal{H})$ be positive invertible operators and $t\in [0,1]$.\\
 $(\rm i)$ If $0<t\leq\frac{1}{2}$, then
 \begin{align}\label{a2}
 A\nabla_{t}B \leq A\sharp_{t}B+2(1-t)(A\nabla B-A\sharp B)-r_{0}(A\sharp B-2A\sharp_{\frac{3}{4}}B+B).
 \end{align}
 $(\rm ii)$ If $\frac{1}{2}<t<1$, then
 \begin{align}\label{a3}
 A\nabla_{t}B \leq A\sharp_{t}B+2t(A\nabla B-A\sharp B)-r_{0}(A\sharp B-2A\sharp_{\frac{1}{4}}B+A),
 \end{align}
 where $r=\min\{\nu,1-\nu\}$ and $r_{0}=\min\{2r,1-2r\}$.
\end{lemma}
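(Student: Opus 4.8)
The plan is to derive both inequalities from the scalar refined/reversed Young inequalities of Zhao et al., that is, from \eqref{ww} together with its companion for $\frac{1}{2}<t<1$, by the familiar transfer device: specialize one scalar variable to $1$, apply functional calculus to a single positive operator, and then conjugate by a square root. This is legitimate precisely because, once a variable is set equal to $1$, every term of \eqref{ww} becomes a continuous function of the one operator $X:=A^{-1/2}BA^{-1/2}$, so the non-commutativity of $A$ and $B$ plays no role.

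Concretely, for part $(\mathrm{i})$ assume $0<t\le\frac{1}{2}$ and set $X=A^{-1/2}BA^{-1/2}$, which is positive invertible because $A$ and $B$ are. Taking $a=1$ and a real variable $b=x>0$ in the reverse half of \eqref{ww}$(\mathrm{i})$, and noting that $R=\max\{t,1-t\}=1-t$ here, one has
\[
(1-t)+tx\le(1-t)\bigl(1-\sqrt{x}\bigr)^{2}-r_{0}\bigl(\sqrt[4]{x}-\sqrt{x}\bigr)^{2}+x^{t}\qquad(x>0).
\]
Applying the Gelfand functional calculus to $X$ turns this into an operator inequality, and then conjugating by $A^{1/2}$ (the map $C\mapsto A^{1/2}CA^{1/2}$ preserves the operator order) together with the identity $A^{1/2}X^{s}A^{1/2}=A\sharp_{s}B$ produces $A\nabla_{t}B=(1-t)A+tB$ on the left and, after expanding $\bigl(I-X^{1/2}\bigr)^{2}=I-2X^{1/2}+X$ and $\bigl(X^{1/4}-X^{1/2}\bigr)^{2}=X^{1/2}-2X^{3/4}+X$, the quantity $(1-t)(A-2A\sharp B+B)-r_{0}\bigl(A\sharp B-2A\sharp_{\frac{3}{4}}B+B\bigr)+A\sharp_{t}B$ on the right. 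Since $A-2A\sharp B+B=2(A\nabla B-A\sharp B)$, this is exactly the inequality claimed in $(\mathrm{i})$.

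Part $(\mathrm{ii})$ follows in the same way, now starting from the reverse half of the scalar inequality valid for $\frac{1}{2}<t<1$: its leading term carries the coefficient $R=\max\{t,1-t\}=t$, and its quarter-power correction is $(\sqrt[4]{ab}-\sqrt{a})^{2}$, which for $a=1$, $b=x$ becomes $\bigl(X^{1/4}-I\bigr)^{2}=X^{1/2}-2X^{1/4}+I$ and hence transfers under the congruence to $A\sharp B-2A\sharp_{\frac{1}{4}}B+A$. This gives precisely the asserted inequality.

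I do not expect a genuine obstacle here. The one subtlety is the reason the transfer must be preceded by setting a variable equal to $1$: the cross terms $\sqrt[4]{ab}$ in \eqref{ww} are not covariant under congruence when $A$ and $B$ do not commute, so a naive substitution $a\mapsto A$, $b\mapsto B$ would be invalid, whereas fixing $a=1$ repairs this by making every term a function of $X$ alone. Beyond that the argument is pure bookkeeping — expanding the squared binomials, matching $A^{1/2}X^{s}A^{1/2}$ with $A\sharp_{s}B$ for $s\in\{\frac{1}{4},\frac{1}{2},\frac{3}{4},t,1\}$, and keeping track of how $r$, $R$, $r_{0}$ and the two binomials in \eqref{ww} swap roles between the ranges $0<t\le\frac{1}{2}$ and $\frac{1}{2}<t<1$. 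Alternatively one could extract the same two inequalities from Lemma \ref{sab2k} by putting $N=2$ and simplifying, but the scalar route above is shorter.
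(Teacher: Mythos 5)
Your argument is correct and is essentially the route the paper intends: Lemma \ref{002} is quoted from Zhao and Wu \cite{zh} without proof, and the preceding Remark describes exactly your transfer device (specialize one scalar variable to $1$ in \eqref{ww}, apply functional calculus to $X=A^{-1/2}BA^{-1/2}$, then conjugate by $A^{1/2}$). The only point worth flagging is that the paper's displayed scalar inequality for $\frac{1}{2}<t<1$ has $r$ and $R$ interchanged by an apparent typo; you correctly take the coefficient on the reverse side to be $R=\max\{t,1-t\}=t$, which is what both Zhao--Wu's actual result and the stated lemma require.
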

Now, we obtain a reverse of Ando's inequality for positive invertible operators as follows.
\begin{theorem}\label{1}
Let $A, B\in\mathcal{B}(\mathcal{H})$ be positive invertible operators, $\Phi$ be a positive linear map and  $t\in [0,1]$.\\
\\
 $(\rm i)$ If $0\leq t\leq\frac{1}{2}$, then
 {\footnotesize\begin{align}\label{i10}
\Phi(A)&\sharp_{t}\Phi(B)-\Phi(A\sharp_{t}B)\leq2R(\Phi(A)\sharp\Phi(B)-\Phi(A\sharp B)+\frac{1}{2}(\Phi(A)+\Phi(B)-2\Phi(A)\sharp\Phi(B)))\nonumber\\&-\sum_{j=1}^{N}s_{j}(2t)\left(\Phi(A\sharp_{1-\beta_{j}(t)}B)+\Phi(A\sharp_{1+2^{-j}-\beta_{j}(t)}B)-
2\Phi(A\sharp_{1-2^{-j-1}-\beta_{j}(t)}B)\right)\nonumber\\&
-\sum_{j=1}^{N}s_{j}(t)\left(\Phi(A)\sharp_{\alpha_{j}(t)}\Phi(B)+
\Phi(A)\sharp_{2^{1-j}+\alpha_{j}(t)}\Phi(B)-2\Phi(A)\sharp_{2^{-j}+\alpha_{j}(t)}\Phi(B)\right).
\end{align}}
$(\rm ii)$ If $\frac{1}{2}\leq t\leq1$, then
 {\footnotesize\begin{align}\label{i20}
\Phi(A)&\sharp_{t}\Phi(B)-\Phi(A\sharp_{t}B)\leq2R(\Phi(A)\sharp\Phi(B)-\Phi(A\sharp B)+\frac{1}{2}(\Phi(A)+\Phi(B)-2\Phi(A)\sharp\Phi(B)))\nonumber\\&-\sum_{j=1}^{N}s_{j}(2-2t)\left(\Phi(A\sharp_{\gamma_{j}(t)}B)+
\Phi(A\sharp_{\gamma_{j}(t)+2^{1-j}}B)-2\Phi(A\sharp_{\gamma_{j}(t)+2^{-j}}B)\right)\nonumber\\&
-\sum_{j=1}^{N}s_{j}(t)\Big(\Phi(A)\sharp_{\alpha_{j}(t)}\Phi(B)+
\Phi(A)\sharp_{2^{1-j}+\alpha_{j}(t)}\Phi(B)-2\Phi(A)\sharp_{2^{-j}+\alpha_{j}(t)}\Phi(B)\Big),
\end{align}}
where $\alpha_{j}(t)=\frac{k_{j}(t)}{2^{j-1}}$, $\beta_{j}(t)=2^{-j}k_{j}(2t)$, $\gamma_{j}(t)=2^{1-j}k_{j}(2-2t)$ and  $R=\max\{t,1-t\}$.
\end{theorem}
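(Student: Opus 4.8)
The plan is to estimate the common pivot $\Phi(A)\nabla_t\Phi(B)=\Phi(A\nabla_t B)$ from two sides, with the crucial bookkeeping being \emph{which} Sababheh-type lemma acts on \emph{which} pair: Lemma~\ref{sab2k} applied to $A,B$ (so that $\Phi$ ends up \emph{outside} the resulting geometric means), and Lemma~\ref{sab1k} applied to $\Phi(A),\Phi(B)$ (so that $\Phi$ is already \emph{inside}). By the Remark, both numerical lemmas are available for positive invertible operators, so this is legitimate. Throughout, the only operator facts needed are that $\Phi$ is linear and order preserving, and that $\Phi$ cannot be pulled through a geometric mean.

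First I would apply the operator form of Lemma~\ref{sab2k}(i) to $A,B$ and then apply $\Phi$ to both sides. Since $\Phi$ is positive (hence monotone) and linear, it commutes with the weighted arithmetic means, $\Phi(A\nabla_t B)=\Phi(A)\nabla_t\Phi(B)$ and $\Phi(A\nabla B)=\tfrac12(\Phi(A)+\Phi(B))=\Phi(A)\nabla\Phi(B)$, whereas every geometric-mean term must remain inside $\Phi$. This yields an upper bound for the pivot,
{\footnotesize
\begin{align}\label{pivup}
\Phi(A)\nabla_t\Phi(B)&\le \Phi(A\sharp_t B)+2(1-t)\bigl(\Phi(A)\nabla\Phi(B)-\Phi(A\sharp B)\bigr)\nonumber\\
&\quad-\sum_{j=1}^{N}s_j(2t)\bigl(\Phi(A\sharp_{1-\beta_j(t)}B)+\Phi(A\sharp_{1+2^{-j}-\beta_j(t)}B)-2\Phi(A\sharp_{1-2^{-j-1}-\beta_j(t)}B)\bigr).
\end{align}
}

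Next I would apply the operator form of Lemma~\ref{sab1k} directly to $\Phi(A),\Phi(B)$ and rearrange it into a lower bound for the pivot relative to $\Phi(A)\sharp_t\Phi(B)$,
{\footnotesize
\begin{align}\label{pivlow}
\Phi(A)\sharp_t\Phi(B)\le\Phi(A)\nabla_t\Phi(B)-\sum_{j=1}^{N}s_j(t)\bigl(\Phi(A)\sharp_{\alpha_j(t)}\Phi(B)+\Phi(A)\sharp_{2^{1-j}+\alpha_j(t)}\Phi(B)-2\Phi(A)\sharp_{2^{-j}+\alpha_j(t)}\Phi(B)\bigr).
\end{align}
}
Subtracting $\Phi(A\sharp_t B)$ from \eqref{pivlow} and feeding in the estimate \eqref{pivup} for $\Phi(A)\nabla_t\Phi(B)-\Phi(A\sharp_t B)$ cancels the pivot and bounds $\Phi(A)\sharp_t\Phi(B)-\Phi(A\sharp_t B)$ above by $2(1-t)\bigl(\Phi(A)\nabla\Phi(B)-\Phi(A\sharp B)\bigr)$ minus the two correction sums. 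A final cosmetic step rewrites the gap term via the identity $\Phi(A)\nabla\Phi(B)-\Phi(A\sharp B)=\bigl(\Phi(A)\sharp\Phi(B)-\Phi(A\sharp B)\bigr)+\tfrac12\bigl(\Phi(A)+\Phi(B)-2\Phi(A)\sharp\Phi(B)\bigr)$, together with $2(1-t)=2R$ for $0\le t\le\tfrac12$, producing \eqref{i10} verbatim. Part (ii) follows by the identical argument, now using Lemma~\ref{sab2k}(ii) (gap coefficient $2t$, summation index $2-2t$, parameters $\gamma_j(t)$) and $R=t$ for $\tfrac12\le t\le1$, which gives \eqref{i20}.

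The decisive point, which I expect to be the only real obstacle, is choosing which lemma acts on which pair. The reverse of Ando's inequality requires \emph{upper}-bounding $\Phi(A)\sharp_t\Phi(B)$ and \emph{lower}-bounding $\Phi(A\sharp_t B)$ against the common pivot; this forces Lemma~\ref{sab2k} onto $A,B$ and Lemma~\ref{sab1k} onto $\Phi(A),\Phi(B)$. Interchanging the two roles would instead control the oppositely signed difference $\Phi(A\sharp_t B)-\Phi(A)\sharp_t\Phi(B)$, which is already nonpositive by Ando's inequality and hence uninformative. No separate verification that the correction sums are nonnegative operators is needed, since each is delivered as a genuine operator inequality by the cited lemma; the placement of $\Phi$ inside versus outside the geometric means is precisely what distinguishes the two correction sums in \eqref{i10} and \eqref{i20}.
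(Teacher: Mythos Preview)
Your proposal is correct and follows essentially the same route as the paper: sandwich the common pivot $\Phi(A)\nabla_t\Phi(B)$ using Lemma~\ref{sab2k} applied to $A,B$ (then push $\Phi$ through) for the upper bound, and Lemma~\ref{sab1k} applied to $\Phi(A),\Phi(B)$ for the lower bound, then subtract. Your final ``cosmetic'' step is indeed a pure algebraic identity (add and subtract $\Phi(A)\sharp\Phi(B)$); the paper cites inequality~\eqref{kitt} at this point, but no inequality is actually being used there.
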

\begin{proof}
The proof of inequality \eqref{i20} is similar to the proof of inequality \eqref{i10}. Thus we only  prove inequality \eqref{i10}.\\
Let $0\leq t\leq\frac{1}{2}$. Applying inequalities \eqref{a2} and \eqref{a4}, we have
{\footnotesize\begin{align}\label{p1}
&\sum_{j=1}^{N}s_{j}(t)(A\sharp_{\alpha_{j}(t)}B+A\sharp_{2^{1-j}+\alpha_{j}(t)}B-2A\sharp_{2^{-j}+\alpha_{j}(t)}B)\nonumber\\&\leq A\nabla_{t} B-A\sharp_{t}B\nonumber\\&\leq 2R(A\nabla B-A\sharp B)
-\sum_{j=1}^{N}s_{j}(2t)\left(A\sharp_{1-\beta_{j}(t)}B+A\sharp_{1+2^{-j}-\beta_{j}(t)}B-2A\sharp_{1-2^{-j-1}-\beta_{j}(t)}B\right).
\end{align}}
Now, using the positive linear map $\Phi$ on \eqref{p1}, we get
{\footnotesize\begin{align}\label{p2}
\sum_{j=1}^{N}&s_{j}(t)\left(\Phi(A\sharp_{\alpha_{j}(t)}B)+\Phi(A\sharp_{2^{1-j}+\alpha_{j}(t)}B)-2\Phi(A\sharp_{2^{-j}+\alpha_{j}(t)}B)\right)
+\Phi(A\sharp_{t}B)\nonumber\\&\leq \Phi(A)\nabla_{t} \Phi(B)\nonumber\\&\leq 2R\left(\Phi(A)\nabla\Phi(B)-\Phi(A\sharp B)\right)+\Phi(A\sharp_{t}B)\nonumber\\&-\sum_{j=1}^{N}s_{j}(2t)\left(\Phi(A\sharp_{1-\beta_{j}(t)}B)+
\Phi(A\sharp_{1+2^{-j}-\beta_{j}(t)}B)-2\Phi(A\sharp_{1-2^{-j-1}-\beta_{j}(t)}B)\right).
\end{align}}
Moreover, if we replace $A$ and $B$ by $\Phi(A)$ and $\Phi(B)$ in inequality \eqref{p1}, respectively, then
{\footnotesize\begin{align}\label{p3}
&\sum_{j=1}^{N}s_{j}(t)\left(\Phi(A)\sharp_{\alpha_{j}(t)}\Phi(B)+\Phi(A)\sharp_{2^{1-j}+
\alpha_{j}(t)}\Phi(B)-2\Phi(A)\sharp_{2^{-j}+\alpha_{j}(t)}\Phi(B)\right)\nonumber\\&
+\Phi(A)\sharp_{t}\Phi(B)\nonumber\\&
\leq\Phi(A)\nabla_{t}\Phi(B)\nonumber\\&
\leq2R\left(\Phi(A)\nabla\Phi(B)-\Phi(A\sharp B)\right)+\Phi(A)\sharp_{t}\Phi(B)\nonumber\\&-\sum_{j=1}^{N}s_{j}(2t)\left(\Phi(A)\sharp_{1-\beta_{j}(t)}\Phi(B)+
\Phi(A)\sharp_{1+2^{-j}-\beta_{j}(t)}\Phi(B)-2\Phi(A)\sharp_{1-2^{-j-1}-\beta_{j}(t)}\Phi(B)\right).
\end{align}}
From the first inequality of \eqref{p3} and the second inequality of \eqref{p2}, we have
{\footnotesize\begin{align*}
\sum_{j=1}^{N}&s_{j}(t)\left(\Phi(A)\sharp_{\alpha_{j}(t)}\Phi(B)+\Phi(A)\sharp_{2^{1-j}+
\alpha_{j}(t)}\Phi(B)-2\Phi(A)\sharp_{2^{-j}+\alpha_{j}(t)}\Phi(B)\right)\\&
+\Phi(A)\sharp_{t}\Phi(B)\\&
\leq\Phi(A)\nabla_{t}\Phi(B)\\&\leq 2R(\Phi(A)\nabla\Phi(B)-\Phi(A\sharp B))+\Phi(A\sharp_{t}B)\\&-\sum_{j=1}^{N}s_{j}(2t)\left(\Phi(A\sharp_{1-\beta_{j}(t)}B)+
\Phi(A\sharp_{1+2^{-j}-\beta_{j}(t)}B)-2\Phi(A\sharp_{1-2^{-j-1}-\beta_{j}(t)}B)\right),
\end{align*}}
which implies that
{\footnotesize\begin{align*}
\sum_{j=1}^{N}&s_{j}(t)\left(\Phi(A)\sharp_{\alpha_{j}(t)}\Phi(B)+\Phi(A)\sharp_{2^{1-j}+
\alpha_{j}(t)}\Phi(B)-2\Phi(A)\sharp_{2^{-j}+\alpha_{j}(t)}\Phi(B)\right)\\&
+\Phi(A)\sharp_{t}\Phi(B)\\&
\leq 2R\left(\Phi(A)\nabla\Phi(B)-\Phi(A\sharp B)\right)+\Phi(A\sharp_{t}B)\\&-\sum_{j=1}^{N}s_{j}(2t)\left(\Phi(A\sharp_{1-\beta_{j}(t)}B)+
\Phi(A\sharp_{1+2^{-j}-\beta_{j}(t)}B)-2\Phi(A\sharp_{1-2^{-j-1}-\beta_{j}(t)}B)\right).
\end{align*}}
Therefore with applying inequality \eqref{kitt}, we get
{\footnotesize\begin{align*}
\Phi(A)&\sharp_{t}\Phi(B)-\Phi(A\sharp_{t}B)\leq2R(\Phi(A)\nabla\Phi(B)-\Phi(A\sharp B))\\&-\sum_{j=1}^{N}s_{j}(2t)\left(\Phi(A\sharp_{1-\beta_{j}(t)}B)+\Phi(A\sharp_{1+2^{-j}-\beta_{j}(t)}B)-
2\Phi(A\sharp_{1-2^{-j-1}-\beta_{j}(t)}B)\right)\\&
-\sum_{j=1}^{N}s_{j}(t)\left(\Phi(A)\sharp_{\alpha_{j}(t)}\Phi(B)+
\Phi(A)\sharp_{2^{1-j}+\alpha_{j}(t)}\Phi(B)-2\Phi(A)\sharp_{2^{-j}+\alpha_{j}(t)}\Phi(B)\right)\\&
\leq2R(\Phi(A)\sharp\Phi(B)-\Phi(A\sharp B)+\frac{1}{2}(\Phi(A)+\Phi(B)-2\Phi(A)\sharp\Phi(B)))\\&-\sum_{j=1}^{N}s_{j}(2t)\left(\Phi(A\sharp_{1-\beta_{j}(t)}B)+\Phi(A\sharp_{1+2^{-j}-\beta_{j}(t)}B)-
2\Phi(A\sharp_{1-2^{-j-1}-\beta_{j}(t)}B)\right)\\&
-\sum_{j=1}^{N}s_{j}(t)\left(\Phi(A)\sharp_{\alpha_{j}(t)}\Phi(B)+
\Phi(A)\sharp_{2^{1-j}+\alpha_{j}(t)}\Phi(B)-2\Phi(A)\sharp_{2^{-j}+\alpha_{j}(t)}\Phi(B)\right).
\end{align*}}
\end{proof}
Similarly for $N=2$ by applying Lemma \ref{001} and Lemma \ref{002}, we can obtain a reverse of Ando's inequality for positive invertible operators.
\begin{corollary}
Let $A, B\in\mathcal{B}(\mathcal{H})$ be positive invertible operators, $\Phi$ be a positive linear map and  $t\in [0,1]$.\\
$(\rm i)$ If $0< t\leq\frac{1}{2}$, then
 {\footnotesize\begin{align}\label{i1}
\Phi(A)\sharp_{t}\Phi(B)-\Phi(A\sharp_{t}B)&\leq2R(\Phi(A)\sharp\Phi(B)-\Phi(A\sharp B)+\frac{1}{2}(\Phi(A)+\Phi(B)-2\Phi(A)\sharp\Phi(B)))\nonumber\\&
-r_{0}(\Phi(A\sharp B)+\Phi(B)-2\Phi(A\sharp_{\frac{3}{4}}B))\nonumber\\&
-r_{0}(\Phi(A)\sharp \Phi(B)+\Phi(A)-2(\Phi(A)\sharp_{\frac{1}{4}}\Phi(B)))\nonumber\\&
\leq2R(\Phi(A)\sharp\Phi(B)-\Phi(A\sharp B)+\frac{1}{2}(\Phi(A)+\Phi(B)-2\Phi(A)\sharp\Phi(B))),
\end{align}}
$(\rm ii)$ if $\frac{1}{2}< t< 1$, then
 {\footnotesize\begin{align}\label{i2}
\Phi(A)\sharp_{t}\Phi(B)-\Phi(A\sharp_{t}B)&\leq2R(\Phi(A)\sharp\Phi(B)-\Phi(A\sharp B)+\frac{1}{2}(\Phi(A)+\Phi(B)-2\Phi(A)\sharp\Phi(B)))\nonumber\\&
-r_{0}(\Phi(A)\sharp \Phi(B)+\Phi(B)-2(\Phi(A)\sharp_{\frac{3}{4}}\Phi(B)))\nonumber\\&
-r_{0}(\Phi(A\sharp B)+\Phi(A)-2\Phi(A\sharp_{\frac{1}{4}} B))\nonumber\\&
\leq2R(\Phi(A)\sharp\Phi(B)-\Phi(A\sharp B)+\frac{1}{2}(\Phi(A)+\Phi(B)-2\Phi(A)\sharp\Phi(B))),
\end{align}}
where $r=\min\{t,1-t\}$, $R=\max\{t,1-t\}$ and $r_{0}=\min\{2r,1-2r\}$.
\end{corollary}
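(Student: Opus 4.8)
The plan is to run the proof of Theorem \ref{1} essentially verbatim, but with the general-$N$ refinements replaced by their $N=2$ specializations, i.e.\ Lemma \ref{001} and Lemma \ref{002} in place of Lemmas \ref{sab1k} and \ref{sab2k}. Because of the symmetry already used in Theorem \ref{1}, it is enough to prove (i) for $0<t\le\frac{1}{2}$ (so $R=1-t$); part (ii) follows in the same way from Lemma \ref{001}(ii) and Lemma \ref{002}(ii), with the two auxiliary terms interchanged and $R=t$.

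First I would apply Lemma \ref{001}(i) with $A,B$ replaced by $\Phi(A),\Phi(B)$, obtaining the lower bound $r_{0}\bigl(\Phi(A)\sharp\Phi(B)-2\Phi(A)\sharp_{\frac{1}{4}}\Phi(B)+\Phi(A)\bigr)+2t\bigl(\Phi(A)\nabla\Phi(B)-\Phi(A)\sharp\Phi(B)\bigr)+\Phi(A)\sharp_{t}\Phi(B)\le\Phi(A)\nabla_{t}\Phi(B)$. Then I would apply the positive linear map $\Phi$ to the inequality of Lemma \ref{002}(i), using only linearity via $\Phi(A\nabla_{s}B)=\Phi(A)\nabla_{s}\Phi(B)$ (so unitality of $\Phi$ is irrelevant), to get the upper bound $\Phi(A)\nabla_{t}\Phi(B)\le\Phi(A\sharp_{t}B)+2(1-t)\bigl(\Phi(A)\nabla\Phi(B)-\Phi(A\sharp B)\bigr)-r_{0}\bigl(\Phi(A\sharp B)+\Phi(B)-2\Phi(A\sharp_{\frac{3}{4}}B)\bigr)$. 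Chaining the two estimates through $\Phi(A)\nabla_{t}\Phi(B)$ and solving for $\Phi(A)\sharp_{t}\Phi(B)-\Phi(A\sharp_{t}B)$ leaves, on the right, the quantity $2(1-t)\bigl(\Phi(A)\nabla\Phi(B)-\Phi(A\sharp B)\bigr)-2t\bigl(\Phi(A)\nabla\Phi(B)-\Phi(A)\sharp\Phi(B)\bigr)$ minus precisely the two $r_{0}$-terms written in \eqref{i1}.

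To conclude, I would note that $-2t\bigl(\Phi(A)\nabla\Phi(B)-\Phi(A)\sharp\Phi(B)\bigr)\le 0$ by the operator arithmetic--geometric mean inequality, so it can be discarded; and since $\Phi(A)\sharp\Phi(B)+\frac{1}{2}\bigl(\Phi(A)+\Phi(B)-2\Phi(A)\sharp\Phi(B)\bigr)=\Phi(A)\nabla\Phi(B)$ identically and $R=1-t$, the surviving term equals $2R\bigl(\Phi(A)\sharp\Phi(B)-\Phi(A\sharp B)+\frac{1}{2}(\Phi(A)+\Phi(B)-2\Phi(A)\sharp\Phi(B))\bigr)$, which gives the first inequality in \eqref{i1}. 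The second inequality only requires that the two $r_{0}$-terms be nonnegative: using the geometric-mean interpolation identities $A\sharp_{\frac{3}{4}}B=(A\sharp B)\sharp B$ and $A\sharp_{\frac{1}{4}}B=(A\sharp B)\sharp A$, the unweighted operator AM--GM inequality $C\sharp D\le\frac{1}{2}(C+D)$ yields $A\sharp B+B-2A\sharp_{\frac{3}{4}}B\ge 0$ and $\Phi(A)\sharp\Phi(B)+\Phi(A)-2\Phi(A)\sharp_{\frac{1}{4}}\Phi(B)\ge 0$; applying the positive map $\Phi$ to the first of these gives $\Phi(A\sharp B)+\Phi(B)-2\Phi(A\sharp_{\frac{3}{4}}B)\ge 0$, and as $r_{0}\ge 0$ both terms may be dropped. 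The computation is routine throughout; the only delicate point is the bookkeeping in the chaining step — in particular checking that the leftover $2t$-cross term (the $2(1-t)$-cross term in part (ii)) carries exactly the right sign to be thrown away, and that this uses nothing about $\Phi$ beyond positivity and linearity.
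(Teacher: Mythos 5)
Your proposal is correct and takes essentially the route the paper intends: chaining the $N=2$ lower bound (Lemma \ref{001}(i)) evaluated at $\Phi(A),\Phi(B)$ against $\Phi$ applied to the $N=2$ upper bound (Lemma \ref{002}(i)), exactly as in the proof of Theorem \ref{1} with the general-$N$ lemmas replaced by their $N=2$ forms. You also supply two justifications the paper leaves implicit but which are genuinely needed --- that the leftover cross term $-2t\bigl(\Phi(A)\nabla\Phi(B)-\Phi(A)\sharp\Phi(B)\bigr)$ may be discarded by the operator AM--GM inequality (the corollary's bound is not literally the $N=2$ instance of \eqref{i10}, since that term has been dropped from it), and that the two $r_{0}$-terms are positive so the final inequality in \eqref{i1} holds.
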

We want to establish some inequalities for positive invertible operators.
\begin{theorem}
Let $A, B\in\mathcal{B}(\mathcal{H})$ be positive invertible. If $t\in [0,1]$ and $\Phi$, $\Psi$ are two unital positive linear maps then for any unit vector $x\in \mathcal{H}$\\
\\
$(\rm i)$ for $0< t\leq\frac{1}{2}$,
\begin{align}\label{th2}
&2r\left(\langle\Phi(A)x,x\rangle\nabla\langle\Psi(B)x,x\rangle-\langle\Phi(A^{\frac{1}{2}})x,x\rangle\langle\Psi(B^{1/2})x,x\rangle\right)\nonumber\\&+
r_{0}\left(\langle\Phi(A^{\frac{1}{2}})x,x\rangle)\langle \Psi(B^{\frac{1}{2}})x,x\rangle+\langle\Phi(A)x,x\rangle-2\langle\Phi(A^{\frac{3}{4}})x,x\rangle \langle\Psi(B^{\frac{1}{4}})x,x\rangle\right)\nonumber\\&
\leq(1-t)\langle\Phi(A)x,x\rangle+t\langle\Psi(B)x,x\rangle-\langle\Psi(B^{t})x,x\rangle\langle\Phi(A^{1-t})x,x\rangle\nonumber\\&
\leq
2R\left(\langle\Phi(A)x,x\rangle\nabla\langle\Psi(B)x,x\rangle-\langle\Phi(A^{\frac{1}{2}})x,x\rangle \langle\Psi(B^{\frac{1}{2}})x,x\rangle\right)\nonumber\\&-r_{0}\left(\langle\Phi(A^{\frac{1}{2}})x,x\rangle \langle\Psi(B^{\frac{1}{2}})x,x\rangle+\langle\Psi(B)x,x\rangle-2\langle\Phi(A^{\frac{1}{4}})x,x\rangle \langle\Psi(B^{\frac{3}{4}})x,x\rangle\right);
\end{align}
$(\rm ii)$ for $\frac{1}{2}<t<1$,
\begin{align*}
&R\left(\langle\Phi(A)x,x\rangle+\langle\Psi(B)x,x\rangle-2\langle\Phi(A^{\frac{1}{2}})x,x\rangle\langle\Psi(B^{1/2})x,x\rangle\right)\\&+
r_{0}\left(\langle\Phi(A^{\frac{1}{2}})x,x\rangle)\langle \Psi(B^{\frac{1}{2}})x,x\rangle+\langle\Phi(A)x,x\rangle-2\langle\Phi(A^{\frac{1}{4}})x,x\rangle \langle\Psi(B^{\frac{3}{4}})x,x\rangle\right)\\&
\leq(1-t)\langle\Phi(A)x,x\rangle+t\langle\Psi(B)x,x\rangle-\langle\Psi(B^{t})x,x\rangle\langle\Phi(A^{1-t})x,x\rangle\\&
\leq
r\left(\langle\Phi(A)x,x\rangle+\langle\Psi(B)x,x\rangle-2\langle\Phi(A^{\frac{1}{2}})x,x\rangle \langle\Psi(B^{\frac{1}{2}})x,x\rangle\right)\\&-r_{0}\left(\langle\Phi(A^{\frac{1}{2}})x,x\rangle \langle\Psi(B^{\frac{1}{2}})x,x\rangle+\langle\Phi(A)x,x\rangle-2\langle\Phi(A^{\frac{3}{4}})x,x\rangle \langle\Psi(B^{\frac{1}{4}})x,x\rangle\right),
\end{align*}
where $r=\min\{t,1-t\}$, $R=\max\{t,1-t\}$, $r_{0}=\min\{2r,1-2r\}$.
\begin{proof}
The proof of part $(\rm ii)$ is similar to the proof of part $(\rm i)$. Thus we only prove $(\rm i)$.\\
Applying inequality \eqref{ww} for any positive real numbers $k, s$, we have
\begin{align}\label{pp1}
r(k+s-2\sqrt{ks})+&r_{0}(k^{\frac{1}{2}}s^{\frac{1}{2}}+k-2k^{\frac{3}{4}}s^{\frac{1}{4}})\nonumber\\&\leq(1-t)k+ts-k^{1-t}s^{t}\nonumber\\&
\leq R(k+s-2\sqrt{ks})-r_{0}(k^{\frac{1}{2}}s^{\frac{1}{2}}+s-2k^{\frac{1}{4}}s^{\frac{3}{4}}).
\end{align}
Fix $s$ in \eqref{pp1}. Then applying functional calculus to the operator  $A$, we have
\begin{align}\label{pp2}
r(A+sI-2\sqrt{s}A^{\frac{1}{2}})+&r_{0}(A^{\frac{1}{2}}s^{\frac{1}{2}}+A-2A^{\frac{3}{4}}s^{\frac{1}{4}})\nonumber\\&
\leq(1-t)A+tsI-s^{t}A^{1-t}\nonumber\\&
\leq R(A+sI-2\sqrt{s}A^{\frac{1}{2}})-r_{0}(A^{\frac{1}{2}}s^{\frac{1}{2}}+sI-2A^{\frac{1}{4}}s^{\frac{3}{4}}).
\end{align}
If we apply the positive linear map $\Phi$  and  inner product for $x\in {\mathcal H}$ with $\|x\|=1$ in inequality \eqref{pp2}, we have
{\footnotesize\begin{align*}
&r\left(\langle\Phi(A)x,x\rangle+s-2\sqrt{s}\langle\Phi(A^{\frac{1}{2}})x,x\rangle\right)\\&+
r_{0}\left(\langle\Phi(A^{\frac{1}{2}})x,x\rangle)s^{\frac{1}{2}}+\langle\Phi(A)x,x\rangle-2\langle\Phi(A^{\frac{3}{4}})x,x\rangle s^{\frac{1}{4}}\right)\\&
\leq(1-t)\langle\Phi(A)x,x\rangle+ts-s^{t}\langle\Phi(A^{1-t})x,x\rangle\nonumber\\&
\leq R\left(\langle\Phi(A)x,x\rangle+s-2\sqrt{s}\langle\Phi(A^{\frac{1}{2}})x,x\rangle\right)-r_{0}\left(\langle\Phi(A^{\frac{1}{2}})x,x\rangle s^{\frac{1}{2}}+s-2\langle\Phi(A^{\frac{1}{4}})x,x\rangle s^{\frac{3}{4}}\right).
\end{align*}}
Now, using the functional calculus to the operator B, we have
{\footnotesize\begin{align*}
&r\left(\langle\Phi(A)x,x\rangle+B-2\langle\Phi(A^{\frac{1}{2}})x,x\rangle B^{1/2}\right)\\&+
r_{0}\left(\langle\Phi(A^{\frac{1}{2}})x,x\rangle)B^{\frac{1}{2}}+\langle\Phi(A)x,x\rangle-2\langle\Phi(A^{\frac{3}{4}})x,x\rangle B^{\frac{1}{4}}\right)\\&
\leq(1-t)\langle\Phi(A)x,x\rangle+tB-B^{t}\langle\Phi(A^{1-t})x,x\rangle\nonumber\\&
\leq R\left(\langle\Phi(A)x,x\rangle+B-2\langle\Phi(A^{\frac{1}{2}})x,x\rangle B^{\frac{1}{2}}\right)-r_{0}\left(\langle\Phi(A^{\frac{1}{2}})x,x\rangle B^{\frac{1}{2}}+B-2\langle\Phi(A^{\frac{1}{4}})x,x\rangle B^{\frac{3}{4}}\right).
\end{align*}}
Taking the positive linear map $\Psi$ and inner product for $y\in {\mathcal H}$ with $\|y\|=1$, we get
\begin{align*}
&r\left(\langle\Phi(A)x,x\rangle+\langle\Psi(B)y,y\rangle-2\langle\Phi(A^{\frac{1}{2}})x,x\rangle\langle\Psi(B^{1/2})x,x\rangle\right)\\&+
r_{0}\left(\langle\Phi(A^{\frac{1}{2}})x,x\rangle)\langle \Psi(B^{\frac{1}{2}})x,x\rangle+\langle\Phi(A)x,x\rangle-2\langle\Phi(A^{\frac{3}{4}})x,x\rangle \langle\Psi(B^{\frac{1}{4}})x,x\rangle\right)\\&
\leq(1-t)\langle\Phi(A)x,x\rangle+t\langle\Psi(B)x,x\rangle-\langle\Psi(B^{t})x,x\rangle\langle\Phi(A^{1-t})x,x\rangle\nonumber\\&
\leq R\left(\langle\Phi(A)x,x\rangle+\langle\Psi(B)x,x\rangle-2\langle\Phi(A^{\frac{1}{2}})x,x\rangle \langle(B^{\frac{1}{2}})x,x\rangle\right)\\&
-r_{0}\left(\langle\Phi(A^{\frac{1}{2}})x,x\rangle \langle\Psi(B^{\frac{1}{2}})x,x\rangle+\langle\Psi(B)x,x\rangle-2\langle\Phi(A^{\frac{1}{4}})x,x\rangle \langle\Psi(B^{\frac{3}{4}})x,x\rangle\right).
\end{align*}
Now, if we put $x=y$, then we get the desired result.
\end{proof}
\end{theorem}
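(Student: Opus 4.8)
The plan is to derive everything from the scalar refinement \eqref{ww} by a two-stage ``freezing'' argument, mirroring the passage from numerical Young-type inequalities to operator ones. First I would rewrite \eqref{ww} for arbitrary positive reals $k,s$: expanding the squares via $(\sqrt{k}-\sqrt{s})^{2}=k+s-2\sqrt{ks}$, $(\sqrt[4]{ks}-\sqrt{k})^{2}=k^{1/2}s^{1/2}+k-2k^{3/4}s^{1/4}$ and $(\sqrt[4]{ks}-\sqrt{s})^{2}=k^{1/2}s^{1/2}+s-2k^{1/4}s^{3/4}$, the case $0<t\le\frac12$ of \eqref{ww} becomes
\begin{align*}
r\bigl(k+s-2\sqrt{ks}\bigr)+r_{0}\bigl(k^{1/2}s^{1/2}+k-2k^{3/4}s^{1/4}\bigr)&\le(1-t)k+ts-k^{1-t}s^{t}\\
&\le R\bigl(k+s-2\sqrt{ks}\bigr)-r_{0}\bigl(k^{1/2}s^{1/2}+s-2k^{1/4}s^{3/4}\bigr).
\end{align*}
Since $r\bigl(k+s-2\sqrt{ks}\bigr)=2r\bigl(k\nabla s-\sqrt{ks}\bigr)$, this already displays the constants $2r$ and $2R$ that appear in the statement.

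Second, holding $s>0$ fixed, I would apply the Gelfand functional calculus to $A$ in the inequality above; this is legitimate because ${\rm sp}(A)\subset(0,\infty)$ and every function involved is continuous there, and it yields an operator inequality among $A,A^{1/2},A^{3/4},A^{1/4},A^{1-t}$ and scalar multiples of $I$. Since $\Phi$ is a positive linear map it preserves this inequality, and so does the state $T\mapsto\langle Tx,x\rangle$ for a unit vector $x$; using $\Phi(I)=I$ and $\|x\|=1$ we have $\langle\Phi(sI)x,x\rangle=s$ and $\langle\Phi(\sqrt{s}\,A^{1/2})x,x\rangle=\sqrt{s}\,\langle\Phi(A^{1/2})x,x\rangle$, and similarly for the other scalar multiples. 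The upshot is a scalar inequality in the variable $s$ whose coefficients are the \emph{non-negative} reals $\langle\Phi(A)x,x\rangle,\langle\Phi(A^{1/2})x,x\rangle,\langle\Phi(A^{3/4})x,x\rangle,\langle\Phi(A^{1/4})x,x\rangle,\langle\Phi(A^{1-t})x,x\rangle$; non-negativity here is precisely where positivity of $\Phi$ (applied to the positive operators $A^{1/2},A^{3/4},A^{1/4},A^{1-t}$) is used.

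Third, with those scalars now frozen, I would repeat the manoeuvre for $B$: the scalar inequality just obtained holds for every $s>0$, in particular on ${\rm sp}(B)\subset(0,\infty)$, so functional calculus on $B$ promotes it to an operator inequality, and then applying the unital positive linear map $\Psi$ followed by $T\mapsto\langle Ty,y\rangle$ for a unit vector $y$ yields the asserted chain with $x$ and $y$ in place of a single vector. Putting $y=x$ gives part $(\rm i)$. Part $(\rm ii)$ is proved identically, starting instead from the $\frac12<t<1$ case of \eqref{ww}, which merely interchanges the roles of $r$ and $R$ in the two extremal terms.

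I expect the only delicate points to be bookkeeping rather than substance: (a) both functional-calculus substitutions require the spectra of $A$ and $B$ to be bounded away from $0$, which is why invertibility is assumed; (b) the quantities carried over from the $A$-stage must be honest non-negative scalars so that multiplying an operator inequality through by them is order-preserving, which positivity of $\Phi$ guarantees; and (c) the expanded squares must be regrouped carefully, so that, for instance, the surviving $r_{0}$-term on the left in $(\rm i)$ comes out as $\langle\Phi(A^{1/2})x,x\rangle\langle\Psi(B^{1/2})x,x\rangle+\langle\Phi(A)x,x\rangle-2\langle\Phi(A^{3/4})x,x\rangle\langle\Psi(B^{1/4})x,x\rangle$. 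None of this is a genuine obstacle; the argument is a disciplined twofold application of functional calculus together with positivity and unitality.
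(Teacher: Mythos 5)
Your proposal is correct and follows essentially the same route as the paper's own proof: rewrite the scalar refinement \eqref{ww} with the squares expanded, fix $s$ and apply functional calculus to $A$, pass through $\Phi$ and the state $T\mapsto\langle Tx,x\rangle$, then repeat the functional-calculus step on $B$ followed by $\Psi$ and $T\mapsto\langle Ty,y\rangle$, and finally set $y=x$. You also correctly flag the two points the paper uses implicitly, namely that unitality gives $\langle\Phi(sI)x,x\rangle=s$ and that positivity of $\Phi$ makes the frozen coefficients non-negative so the second functional-calculus step is order-preserving.
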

\begin{theorem}\label{waw}
Let $A, B\in\mathcal{B}(\mathcal{H})$ be positive   invertible. If $t\in [0,1]$ and $\Phi$, $\Psi$ are two unital positive linear maps, then for any unit vector $x\in \mathcal{H}$\\
$(\rm i)$ for $0< t\leq\frac{1}{2}$,
{\footnotesize\begin{align*}
&2r\left(\langle\Phi(A)x,x\rangle\nabla\langle\Psi(B)x,x\rangle-\langle\Phi(A^{\frac{1}{2}})x,x\rangle\langle\Psi(B)x,x\rangle^{1/2}\right)\nonumber\\&+
r_{0}\left(\langle\Phi(A^{1/2})x,x\rangle\langle \Psi(B)x,x\rangle^{1/2}+\langle\Phi(A)x,x\rangle-2\langle\Phi(A^{3/4})x,x\rangle \langle\Psi(B)x,x\rangle^{1/4}\right)\nonumber\\&
\leq(1-t)\langle\Phi(A)x,x\rangle+t\langle\Psi(B)x,x\rangle-\langle\Psi(B)x,x\rangle^{t}\langle\Phi(A^{1-t})x,x\rangle\nonumber\\&
\leq
2R\left(\langle\Phi(A)x,x\rangle\nabla\langle\Psi(B)x,x\rangle-\langle\Phi(A^{1/2})x,x\rangle \langle\Psi(B)x,x\rangle^{1/2}\right)\nonumber\\&-r_{0}\left(\langle\Phi(A^{1/2})x,x\rangle \langle\Psi(B)x,x\rangle^{1/2}+\langle\Psi(B)x,x\rangle-2\langle\Phi(A^{1/4})x,x\rangle \langle\Psi(B)x,x\rangle^{3/4}\right);
\end{align*}}
$(\rm ii)$ for $\frac{1}{2}< t<1$,
{\footnotesize\begin{align*}
&R\left(\langle\Phi(A)x,x\rangle+\langle\Psi(B)x,x\rangle-2\langle\Phi(A^{1/2})x,x\rangle\langle\Psi(B)x,x\rangle^{1/2}\right)\\&+
r_{0}\left(\langle\Phi(A^{1/2})x,x\rangle)\langle \Psi(B)x,x\rangle^{1/2}+\langle\Phi(A)x,x\rangle-2\langle\Phi(A^{1/4})x,x\rangle \langle\Psi(B)x,x\rangle^{3/4}\right)\\&
\leq(1-t)\langle\Phi(A)x,x\rangle+t\langle\Psi(B)x,x\rangle-\langle\Psi(B)x,x\rangle^{t}\langle\Phi(A^{1-t})x,x\rangle\\&
\leq
r\left(\langle\Phi(A)x,x\rangle+\langle\Psi(B)x,x\rangle-2\langle\Phi(A^{1/2})x,x\rangle \langle\Psi(B)x,x\rangle^{1/2}\right)\\&-r_{0}\left(\langle\Phi(A^{1/2})x,x\rangle \langle\Psi(B)x,x\rangle^{1/2}+\langle\Phi(A)x,x\rangle-2\langle\Phi(A^{3/4})x,x\rangle \langle\Psi(B)x,x\rangle^{1/4}\right),
\end{align*}}
where $r=\min\{t,1-t\}$, $R=\max\{t,1-t\}$, $r_{0}=\min\{2r,1-2r\}$.
\end{theorem}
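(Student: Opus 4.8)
The plan is to re-use the derivation of the preceding theorem but to stop one step earlier: there one pushes the second map $\Psi$ through powers of $B$, which produces the terms $\langle\Psi(B^{1/2})x,x\rangle$, $\langle\Psi(B^{3/4})x,x\rangle$, $\langle\Psi(B^{1/4})x,x\rangle$; here I instead collapse $B$ to the scalar $\langle\Psi(B)x,x\rangle$ at the very beginning, which is exactly what replaces those quantities by powers of the single number $\langle\Psi(B)x,x\rangle$.

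Concretely, for $0<t\le\frac12$ I would start from Zhao et al.'s refinement \eqref{ww}, written for arbitrary positive reals $k,s$ in the symmetric form \eqref{pp1}. Fixing $s$ and applying the functional calculus to $A$ (valid because $A>0$, so ${\rm sp}(A)\subset(0,\infty)$) gives the operator inequality \eqref{pp2}. Applying the unital positive linear map $\Phi$ to \eqref{pp2} and pairing with the unit vector $x$ — using that $\Phi$ is order preserving, $\Phi(I)=I$, and $u\mapsto\langle\Phi(u)x,x\rangle$ is a positive linear functional with $\langle Ix,x\rangle=1$ — yields precisely the scalar chain
{\footnotesize\begin{align*}
&r\bigl(\langle\Phi(A)x,x\rangle+s-2\sqrt{s}\,\langle\Phi(A^{1/2})x,x\rangle\bigr)+r_0\bigl(\langle\Phi(A^{1/2})x,x\rangle s^{1/2}+\langle\Phi(A)x,x\rangle-2\langle\Phi(A^{3/4})x,x\rangle s^{1/4}\bigr)\\
&\qquad\le(1-t)\langle\Phi(A)x,x\rangle+ts-s^t\langle\Phi(A^{1-t})x,x\rangle\\
&\qquad\le R\bigl(\langle\Phi(A)x,x\rangle+s-2\sqrt{s}\,\langle\Phi(A^{1/2})x,x\rangle\bigr)-r_0\bigl(\langle\Phi(A^{1/2})x,x\rangle s^{1/2}+s-2\langle\Phi(A^{1/4})x,x\rangle s^{3/4}\bigr),
\end{align*}}
which is exactly the inequality displayed in the previous proof just before $B$ is introduced; crucially, it holds for \emph{every} $s>0$.

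To finish, note that $B>0$ gives $B\ge\varepsilon I$ for some $\varepsilon>0$, hence $\langle\Psi(B)x,x\rangle\ge\varepsilon\langle\Psi(I)x,x\rangle=\varepsilon>0$ because $\Psi$ is unital and positive; thus $\langle\Psi(B)x,x\rangle$ is an admissible value of the free parameter. Substituting $s=\langle\Psi(B)x,x\rangle$ into the chain above, and rewriting the leading bracket through the identity $a+b=2(a\nabla b)$, gives part $(\rm i)$ verbatim. Part $(\rm ii)$, for $\frac12<t<1$, follows in exactly the same way, starting instead from the companion (unlabelled) refinement of Young's inequality for $\frac12<t<1$ stated immediately after \eqref{ww}.

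I do not expect a real obstacle — the argument is a one-step contraction of the previous proof. What needs attention is only the bookkeeping of the constants $r=\min\{t,1-t\}$, $R=\max\{t,1-t\}$, $r_0=\min\{2r,1-2r\}$ and the translation between $a\nabla b$ and $\tfrac{a+b}{2}$, together with the single conceptual point that the scalar inequality above genuinely holds for \emph{all} $s>0$ so that the specialisation $s=\langle\Psi(B)x,x\rangle$ is legitimate; this is immediate since both the functional calculus and the positivity of $\Phi$ preserve the pointwise numerical inequality in $k$.
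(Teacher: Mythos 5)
Your proposal is correct and is essentially the paper's own argument: the paper likewise applies the functional calculus only to $A$ and uses the scalar $\langle\Psi(B)x,x\rangle$ as the second argument of Zhao et al.'s refinement, the sole (immaterial) difference being that the paper substitutes $s=\langle\Psi(B)x,x\rangle$ before the functional calculus on $A$ while you keep $s$ free and specialise at the end. Your added remarks on the positivity of $\langle\Psi(B)x,x\rangle$ and the identity $a+b=2(a\nabla b)$ are exactly the bookkeeping the paper leaves implicit.
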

\begin{proof}
The proof of part $(\rm ii)$ is similar to the proof of part $(\rm i)$. Thus we just prove $(\rm i)$.
For any positive real number $k$ and any unit vector $x\in \mathcal H$, we have
{\footnotesize\begin{align}\label{ppp1}
&r\left(k+\langle\Psi(B)x,x\rangle-2\sqrt{k}\langle\Psi(B)x,x\rangle\right)+r_{0}\left(k^{1/2}\langle\Psi(B)x,x\rangle^{1/2}
+k-2k^{3/4}\langle\Psi(B)x,x\rangle^{1/4}\right)\nonumber\\&\leq(1-t)k+t\langle\Psi(B)x,x\rangle-k^{1-t}\langle\Psi(B)x,x\rangle^{t}\nonumber\\&
\leq R\left(k+\langle\Psi(B)x,x\rangle-2\sqrt{k}\langle\Psi(B)x,x\rangle^{1/2}\right)\nonumber\\&\,\,\,-r_{0}\left(k^{1/2}\langle\Psi(B)x,x\rangle^{1/2}+
\langle\Psi(B)x,x\rangle-2k^{1/4}\langle\Psi(B)x,x\rangle^{3/4}\right).
\end{align}}
Applying inequality \eqref{ppp1} and the functional calculus for the operator $A$, we have
{\footnotesize\begin{align}\label{ppp2}
r\big(A+\langle\Psi(B)x,x\rangle I_{\mathcal H}&-2\sqrt{A}\langle\Psi(B)x,x\rangle\big)\\&+r_{0}\big(A^{1/2}\langle\Psi(B)x,x\rangle^{1/2}
+A-2A^{3/4}\langle\Psi(B)x,x\rangle^{1/4}\big)\nonumber\\&\leq(1-t)A+t\langle\Psi(B)x,x\rangle I_{\mathcal H}-A^{1-t}\langle\Psi(B)x,x\rangle^{t}\nonumber\\&
\leq R\big(A+\langle\Psi(B)x,x\rangle I_{\mathcal H}-2\sqrt{A}\langle\Psi(B)x,x\rangle^{1/2}\big)\nonumber\\&-r_{0}\big(A^{1/2}\langle\Psi(B)x,x\rangle^{1/2}+
\langle\Psi(B)x,x\rangle I_{\mathcal H}-2A^{1/4}\langle\Psi(B)x,x\rangle^{3/4}\big).
\end{align}}
Now, using the unital positive operator $\Phi$  and  the inner product for $y\in \mathcal H$ with $\|y\|=1$ in inequality \eqref{ppp2}, we get
{\footnotesize\begin{align*}
&r\left(\langle\Phi(A)y,y\rangle+\langle\Psi(B)x,x\rangle I_{\mathcal H}-2\langle\Phi(A)y,y\rangle^{1/2}\langle\Psi(B)x,x\rangle\right)\\&+r_{0}\left(\langle\Phi(A^{1/2})y,y\rangle\langle\Psi(B)x,x\rangle^{1/2}
+\langle\Phi(A)y,y\rangle-2\langle\Phi(A^{3/4})y,y\rangle\langle\Psi(B)x,x\rangle^{1/4}\right)\\&\leq(1-t)\langle\Phi(A)y,y\rangle+t\langle\Psi(B)x,x\rangle I_{\mathcal H}-\langle\Phi(A^{1-t})y,y\rangle\langle\Psi(B)x,x\rangle^{t}\nonumber\\&
\leq R\left(\langle\Phi(A)y,y\rangle+\langle\Psi(B)x,x\rangle I_{\mathcal H}-2\langle\Phi(A)y,y\rangle^{1/2}\langle\Psi(B)x,x\rangle^{1/2}\right)\\&-r_{0}\left(\langle\Phi(A^{1/2})y,y\rangle\langle\Psi(B)x,x\rangle^{1/2}+
\langle\Psi(B)x,x\rangle I_{\mathcal H}-2\langle\Phi(A^{1/4})y,y\rangle\langle\Psi(B)x,x\rangle^{3/4}\right).
\end{align*}}
Now,  putting $y=x$, we get the desired result.
\end{proof}
\begin{corollary}\label{cor}
Let $A\in\mathcal{B}(\mathcal{H})$ be positive,  $\Phi$ is a unital positive linear map and  $t\in [0,1]$. Then for any unit vector $x\in \mathcal{H}$\\
$(\rm i)$ for $0< t\leq\frac{1}{2}$,
{\footnotesize\begin{align*}
&2r\langle \Phi(A)x,x\rangle^{t-\frac{1}{2}}\left(\langle\Phi(A)x,x\rangle^{\frac{1}{2}}-\langle\Phi(A^{\frac{1}{2}})x,x\rangle\right)\\&+
r_{0}\langle\Phi(A)x,x\rangle^{t-\frac{1}{2}}\left(\langle\Phi(A^{1/2})x,x\rangle+
\langle\Phi(A)x,x\rangle^{1/2}-2\langle\Phi(A^{3/4})x,x\rangle\langle\Phi(A)x,x\rangle^{-1/4}\right)\\&
\leq\langle\Phi(A)x,x\rangle^{t}-\langle\Phi(A^t)x,x\rangle\\&
\leq2R\langle \Phi(A)x,x\rangle^{t-\frac{1}{2}}\left(\langle\Phi(A)x,x\rangle^{\frac{1}{2}}-\langle\Phi(A^{\frac{1}{2}})x,x\rangle\right)\\&-
r_{0}\langle\Phi(A)x,x\rangle^{t-\frac{1}{2}}\left(\langle\Phi(A^{1/2})x,x\rangle+
\langle\Phi(A)x,x\rangle^{1/2}-2\langle\Phi(A^{1/4})x,x\rangle\langle\Phi(A)x,x\rangle^{1/4}\right);
\end{align*}}
$(\rm ii)$ for $\frac{1}{2}< t<1$,
{\footnotesize\begin{align*}
&2R\langle \Phi(A)x,x\rangle^{t-\frac{1}{2}}\left(\langle\Phi(A)x,x\rangle^{\frac{1}{2}}-\langle\Phi(A^{\frac{1}{2}})x,x\rangle\right)\\&+
r_{0}\langle\Phi(A)x,x\rangle^{t-\frac{1}{2}}\left(\langle\Phi(A^{1/2})x,x\rangle+
\langle\Phi(A)x,x\rangle^{1/2}-2\langle\Phi(A^{1/4})x,x\rangle\langle\Phi(A)x,x\rangle^{1/4}\right)\\&
\leq\langle\Phi(A)x,x\rangle^{t}-\langle\Phi(A^t)x,x\rangle\\&
\leq2r\langle \Phi(A)x,x\rangle^{t-\frac{1}{2}}\left(\langle\Phi(A)x,x\rangle^{\frac{1}{2}}-\langle\Phi(A^{\frac{1}{2}})x,x\rangle\right)\\&-
r_{0}\langle\Phi(A)x,x\rangle^{t-\frac{1}{2}}\left(\langle\Phi(A^{1/2})x,x\rangle+
\langle\Phi(A)x,x\rangle^{1/2}-2\langle\Phi(A^{3/4})x,x\rangle\langle\Phi(A)x,x\rangle^{-1/4}\right),
\end{align*}}
where $r=\min\{t,1-t\}$, $R=\max\{t,1-t\}$, $r_{0}=\min\{2r,1-2r\}$.
\end{corollary}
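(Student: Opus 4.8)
\noindent The plan is to read Corollary~\ref{cor} off Theorem~\ref{waw} in its ``diagonal'' case, namely by pushing the scalar refinement \eqref{ww} through the unital positive map $\Phi$ and then tuning the free scalar variable to $\langle\Phi(A)x,x\rangle$. Throughout I would assume $\langle\Phi(A)x,x\rangle>0$ (automatic when $A$ is invertible), so that the prefactor $\langle\Phi(A)x,x\rangle^{t-1/2}$ is meaningful.

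For part $({\rm i})$ (so $0<t\le\tfrac12$), I would start from \eqref{ww} for two positive reals $c$ and $\lambda$, the letter carrying the exponent $t$ being $\lambda$; expand the three squares $(\sqrt c-\sqrt\lambda)^2$, $(\sqrt[4]{c\lambda}-\sqrt c)^2$, $(\sqrt[4]{c\lambda}-\sqrt\lambda)^2$ into monomials in $c,\lambda$, and rearrange so that $(1-t)c+t\lambda-c^{1-t}\lambda^{t}$ is squeezed between the two resulting bounds. Next, freezing $c$, I would replace $\lambda$ by the positive operator $A$ through the continuous functional calculus --- legitimate because each ``bound minus middle'' is a nonnegative function on $[0,\infty)$ --- then apply $\Phi$ (order preserving and unital) and the inner product $\langle\,\cdot\,x,x\rangle$ against the unit vector $x$; this replaces $A^{1/2},A^{3/4},A^{1/4},A^{t}$ by $\langle\Phi(A^{1/2})x,x\rangle,\langle\Phi(A^{3/4})x,x\rangle,\langle\Phi(A^{1/4})x,x\rangle,\langle\Phi(A^{t})x,x\rangle$ and leaves the powers of $c$ as scalars. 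Finally I would set $c=\langle\Phi(A)x,x\rangle$: the middle term becomes $\langle\Phi(A)x,x\rangle-\langle\Phi(A)x,x\rangle^{1-t}\langle\Phi(A^{t})x,x\rangle$, and dividing the whole chain by the positive scalar $\langle\Phi(A)x,x\rangle^{1-t}$ turns it into the asserted quantity $\langle\Phi(A)x,x\rangle^{t}-\langle\Phi(A^{t})x,x\rangle$, while each square term --- after the same division and after pulling out a factor $\langle\Phi(A)x,x\rangle^{1/2}$ --- becomes one of the bracketed expressions in the statement times $\langle\Phi(A)x,x\rangle^{t-1/2}$; the passage $r\mapsto 2r$ and $R\mapsto 2R$ is just the collapse $c+\langle\Phi(A)x,x\rangle=2\langle\Phi(A)x,x\rangle$ after the substitution.

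For part $({\rm ii})$ I would run exactly the same argument, but starting from the $\tfrac12<t<1$ companion of \eqref{ww} (from \cite{zh}), which interchanges $r\leftrightarrow R$ and swaps the two quartic squares and hence yields precisely the form displayed in $({\rm ii})$. No inequality can reverse along the way, since functional calculus, a positive linear map, an inner product, and multiplication by a positive scalar are all order preserving. The step I expect to be the main obstacle is purely bookkeeping: after dividing by $\langle\Phi(A)x,x\rangle^{1-t}$ and extracting $\langle\Phi(A)x,x\rangle^{1/2}$ from each square term, one must check carefully that $A^{1/4}$ and $A^{3/4}$ land in the correct bracket carrying the correctly signed power $\langle\Phi(A)x,x\rangle^{\pm1/4}$, and that the lower and upper brackets are not accidentally interchanged.
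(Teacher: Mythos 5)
Your strategy---push the scalar refinement \eqref{ww} through the functional calculus, then $\Phi$, then the inner product, and finally set the free scalar equal to $m:=\langle\Phi(A)x,x\rangle$ and divide by $m^{1-t}$---is in substance the same as the paper's, which simply sets $\Psi=\Phi$ and $B=A$ in Theorem \ref{waw}. The problem is that the step you defer as ``the main obstacle'' (which bracket receives $A^{1/4}$ and which receives $A^{3/4}$) is precisely where the argument fails to produce the printed statement, and you never resolve it. Do the bookkeeping: for $0<t\le\frac12$ with $a=c$ and $b=\lambda\mapsto A$, the lower-bound correction in \eqref{ww} is $r_0(\sqrt[4]{c\lambda}-\sqrt{c})^2=r_0\bigl(\sqrt{c\lambda}+c-2c^{3/4}\lambda^{1/4}\bigr)$, so after applying $\Phi$ and $\langle\,\cdot\,x,x\rangle$, putting $c=m$ and dividing by $m^{1-t}$, the \emph{lower} bound acquires $r_0m^{t-1/2}\bigl(\langle\Phi(A^{1/2})x,x\rangle+m^{1/2}-2\langle\Phi(A^{1/4})x,x\rangle m^{1/4}\bigr)$, while the term $-r_0(\sqrt[4]{c\lambda}-\sqrt{\lambda})^2$, which produces $-2\langle\Phi(A^{3/4})x,x\rangle m^{-1/4}$, sits in the \emph{upper} bound. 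Corollary \ref{cor}(i) as printed has these two $r_0$-brackets interchanged, and they are genuinely different quantities (try $\Phi=\mathrm{id}$, $A=\mathrm{diag}(1,\varepsilon)$, $x=\frac{1}{\sqrt2}(1,1)$ with $\varepsilon$ small). So your computation, carried out, proves a correct inequality that is not the stated one; it does not establish the corollary as written.

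The mismatch is inherited from the paper's own one-line proof. Substituting $\Psi=\Phi$, $B=A$ into Theorem \ref{waw}(i) leaves the middle term as $m-m^{t}\langle\Phi(A^{1-t})x,x\rangle$; to reach the displayed middle term $m^{t}-\langle\Phi(A^{t})x,x\rangle$ one must divide by a power of $m$ \emph{and} relabel $t\mapsto 1-t$, which converts the hypothesis $0<t\le\frac12$ into $\frac12\le t<1$. Under that honest derivation the display of part (i) belongs to $t\in[\frac12,1)$ and that of part (ii) to $t\in(0,\frac12)$. If you want your proof to stand, you should either prove the statement with the two $r_0$-brackets in the positions your computation actually yields (and flag the discrepancy with the printed corollary), or justify the printed version by a separate argument---your present write-up does neither.
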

\begin{proof}
Letting $\Psi=\Phi$ and $B=A$ in Theorem \ref{waw}, we get the desired inequalities.
\end{proof}
In the next result we obtain a refinement of inequality \eqref{mc1} for $t\in(0,\frac{1}{2}]$.
\begin{corollary}
Let $T\in {\mathcal B}(\mathcal H)$ be positive operator and  $x\in {\mathcal H}$ be a unit vector. Then for $0< t\leq\frac{1}{2}$, we have
{\footnotesize\begin{align*}
\langle Tx,x&\rangle^{t}-\langle T^{t}x,x\rangle\\&\leq2R\langle Tx,x\rangle^{t-\frac{1}{2}}\left(\langle Tx,x\rangle^\frac{1}{2}-\langle T^\frac{1}{2}x,x\rangle\right)\\&-
r_{0}\langle Tx,x\rangle^{t-\frac{1}{2}}\left(\langle T^\frac{1}{2}x,x\rangle+
\langle Tx,x\rangle^\frac{1}{2}-2\langle T^\frac{1}{4}x,x\rangle\langle Tx,x\rangle^\frac{1}{4}\right)\\&
\leq2R\left(\langle Tx,x\rangle^\frac{1}{2}-\langle T^\frac{1}{2}x,x\rangle\right)-r_{0}\left(\langle T^\frac{1}{2}x,x\rangle+
\langle Tx,x\rangle^\frac{1}{2}-2\langle T^\frac{1}{4}x,x\rangle\langle Tx,x\rangle^\frac{1}{4}\right),
\end{align*}}
where $r=\min\{t,1-t\}$, $R=\max\{t,1-t\}$, $r_{0}=\min\{2r,1-2r\}$.
\end{corollary}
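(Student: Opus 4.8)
The asserted chain has three terms: its first inequality is precisely Corollary~\ref{cor}(i) applied to the identity map, and its second inequality is an elementary consequence of that same corollary together with H\"{o}lder--McCarthy's inequality~\eqref{mc1}. So my plan has two steps, and throughout one may assume $\langle Tx,x\rangle>0$ (otherwise $Tx=0$ and every term vanishes).

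\emph{Step 1.} For the first inequality I would invoke Corollary~\ref{cor}(i) with the unital positive linear map $\Phi=\mathrm{id}_{\mathcal B(\mathcal H)}$ and $A=T$. Under these substitutions the left-hand side of part~(i) becomes $\langle Tx,x\rangle^{t}-\langle T^{t}x,x\rangle$, and the right-hand side becomes, verbatim, the middle term $2R\langle Tx,x\rangle^{t-\frac12}\bigl(\langle Tx,x\rangle^{\frac12}-\langle T^{\frac12}x,x\rangle\bigr)-r_{0}\langle Tx,x\rangle^{t-\frac12}\bigl(\langle T^{\frac12}x,x\rangle+\langle Tx,x\rangle^{\frac12}-2\langle T^{\frac14}x,x\rangle\langle Tx,x\rangle^{\frac14}\bigr)$, so there is nothing further to prove.

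\emph{Step 2.} For the second inequality I would factor the common scalar $\langle Tx,x\rangle^{t-\frac12}$ out of both summands of the middle term, writing it as $\langle Tx,x\rangle^{t-\frac12}E$ where $E:=2R\bigl(\langle Tx,x\rangle^{\frac12}-\langle T^{\frac12}x,x\rangle\bigr)-r_{0}\bigl(\langle T^{\frac12}x,x\rangle+\langle Tx,x\rangle^{\frac12}-2\langle T^{\frac14}x,x\rangle\langle Tx,x\rangle^{\frac14}\bigr)$ is exactly the claimed right-hand side. It then suffices to check that $E\ge0$ and that $\langle Tx,x\rangle^{t-\frac12}\le1$ (the latter since $t-\frac12\le0$): multiplying the nonnegative number $E$ by a factor not exceeding $1$ only decreases it, which is the second inequality. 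The bound $E\ge0$ comes for free from Step~1: combined with~\eqref{mc1} it gives $0\le\langle Tx,x\rangle^{t}-\langle T^{t}x,x\rangle\le\langle Tx,x\rangle^{t-\frac12}E$, and dividing by the positive scalar $\langle Tx,x\rangle^{t-\frac12}$ forces $E\ge0$. Concatenating the two steps yields the displayed three-term chain, whose extreme members form the reverse H\"{o}lder--McCarthy estimate announced in the Conclusion.

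The step I expect to require the most care is the sign/size bookkeeping in Step~2: one needs $E\ge0$ and the scalar factor $\le1$ to hold simultaneously. Extracting $E\ge0$ from Step~1 is cleanest, but one can also see it directly: by~\eqref{mc1}, $\langle T^{\frac14}x,x\rangle\le\langle T^{\frac12}x,x\rangle^{\frac12}$ and $\langle T^{\frac14}x,x\rangle\le\langle Tx,x\rangle^{\frac14}$, hence $2\langle T^{\frac14}x,x\rangle\langle Tx,x\rangle^{\frac14}\le\langle T^{\frac12}x,x\rangle+\langle Tx,x\rangle^{\frac12}$ by the arithmetic--geometric mean inequality, so the $r_{0}$-bracket in $E$ is itself nonnegative, and a companion two-term estimate (using $R=\max\{t,1-t\}$ and $r_{0}=\min\{2r,1-2r\}$) dominates it by $2R(\langle Tx,x\rangle^{\frac12}-\langle T^{\frac12}x,x\rangle)$. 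Nothing beyond~\eqref{mc1},~\eqref{ww}, and Corollary~\ref{cor} is needed.
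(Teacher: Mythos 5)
Your Step 1 is correct and is essentially the paper's own argument: Corollary \ref{cor}(i) with $\Phi=\mathrm{id}$ and $A=T$ yields the first inequality verbatim (the paper's instruction to also swap $t$ with $1-t$ is not needed for this and appears to be a slip). The problem is Step 2. There you need $\langle Tx,x\rangle^{t-\frac12}\le 1$, and you justify this solely by $t-\tfrac12\le 0$; but a negative exponent gives a number $\le 1$ only when the base is $\ge 1$. For a general positive operator $T$ and unit vector $x$ one can perfectly well have $0<\langle Tx,x\rangle<1$, in which case $\langle Tx,x\rangle^{t-\frac12}>1$, and since you have (correctly) shown $E\ge 0$, multiplying by this factor \emph{increases} $E$ and the second inequality reverses. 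Concretely, take $T=\mathrm{diag}(1/100,\,1/25)$, $x=(1,1)/\sqrt2$ and $t=1/4$ (so $R=3/4$, $r_0=1/2$): then $\langle Tx,x\rangle^{t}-\langle T^{t}x,x\rangle\approx 0.0159$ while $2R\bigl(\langle Tx,x\rangle^{1/2}-\langle T^{1/2}x,x\rangle\bigr)-r_0\bigl(\langle T^{1/2}x,x\rangle+\langle Tx,x\rangle^{1/2}-2\langle T^{1/4}x,x\rangle\langle Tx,x\rangle^{1/4}\bigr)\approx 0.0099$, so the claimed chain fails. In other words, the step you flagged as "requiring the most care" is not merely delicate — it is false without an additional hypothesis such as $\langle Tx,x\rangle\ge 1$ (e.g. $T\ge I$), and no argument can close this gap as the statement stands.

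A secondary remark: your derivation of $E\ge 0$ from Step 1 together with \eqref{mc1} is sound (as is the direct AM--GM argument for the $r_0$-bracket), so the first inequality and the nonnegativity of the final expression are fine; the defect is isolated entirely in the scalar factor $\langle Tx,x\rangle^{t-\frac12}$. The paper's one-line proof never addresses the second inequality at all, so your attempt actually exposes where the published statement breaks down rather than failing to reproduce a correct argument.
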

\begin{proof}
If we replace $\Phi(A)=A, A\in\mathcal{B}(\mathcal{H})$ and $t$ with $1-t$ in Corollary \ref{cor}, then we get the desired result.
\end{proof}
\section{Applications}
In this section, we prove  difference reverse-types of the  H\"{o}lder and  Cauchy-Schwarz inequalities.
\begin{theorem}\label{TH}
 Let $A_i, B_i\in\mathcal{B}(\mathcal{H})\,\,(1\leq i \leq n)$ be positive  invertible  and  $t\in[0,1]$.\\
$(\rm i)$ If $0< t\leq\frac{1}{2}$, then
{\footnotesize\begin{align*}
&\left(\sum_{i=1}^{n}A_{i}\right)\sharp_{t}\left(\sum_{i=1}^{n}B_{i}\right)-\left(\sum_{i=1}^{n}A_{i}\sharp_{t}B_{i}\right)\\&\leq R\left(\sum_{i=1}^{n}A_{i}+\sum_{i=1}^{n}B_{i}-2\sum_{i=1}^{n}(A_{i}\sharp B_{i})\right)-r_{0}\left(\sum_{i=1}^{n}(A_{i}\sharp B_{i})+\sum_{i=1}^{n}B_{i}-2\sum_{i=1}^{n}(A_{i}\sharp_{\frac{3}{4}}B_{i})\right)
\\&-r_{0}\left(\sum_{i=1}^{n}A_{i}\sharp\sum_{i=1}^{n}B_{i}+\sum_{i=1}^{n}A_{i}-2\left(\sum_{i=1}^{n}A_{i}\sharp_{\frac{1}{4}}\sum_{i=1}^{n}B_{i}\right)\right).
\end{align*}}
$(\rm ii)$ If $\frac{1}{2}<t< 1$, then
{\footnotesize\begin{align*}
\left(\sum_{i=1}^{n}A_{i}\right)&\sharp_{t}\left(\sum_{i=1}^{n}B_{i}\right)-\Bigg(\sum_{i=1}^{n}A_{i}\sharp_{t}B_{i}\Bigg)
\leq R\left(\sum_{i=1}^{n}A_{i}+\sum_{i=1}^{n}B_{i}-2\left(\sum_{i=1}^{n}A_{i}\sharp B_{i}\right)\right)\\&-r_{0}\left((\sum_{i=1}^{n}A_{i})\sharp (\sum_{i=1}^{n}B_{i})+\sum_{i=1}^{n}B_{i}-2\left((\sum_{i=1}^{n}A_{i})\sharp_{\frac{3}{4}}(\sum_{i=1}^{n}B_{i})\right)\right)\\&
-r_{0}\left(\sum_{i=1}^{n}(A_{i}\sharp B_{i})+\sum_{i=1}^{n}A_{i}-2\sum_{i=1}^{n}(A_{i}\sharp_{\frac{1}{4}} B_{i})\right),
\end{align*}}
where $r=\min\{t,1-t\}$, $R=\max\{t,1-t\}$ and $r_{0}=\min\{2r,1-2r\}$.
\end{theorem}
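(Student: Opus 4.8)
The plan is to combine the two ``$N=2$'' operator refinements already recorded, Lemma~\ref{001} and Lemma~\ref{002}, using them in two complementary ways: applied separately to each pair $A_i,B_i$, and applied to the aggregated pair $\sum_{i=1}^{n}A_i$, $\sum_{i=1}^{n}B_i$ (both again positive invertible, being finite sums of positive invertible operators). The bridge between the two applications is the linearity of the weighted arithmetic mean, $\sum_{i=1}^{n}(A_i\nabla_t B_i)=\left(\sum_{i=1}^{n}A_i\right)\nabla_t\left(\sum_{i=1}^{n}B_i\right)$. As in the previous theorems, parts $(\rm i)$ and $(\rm ii)$ are proved in exactly the same way, so I would write out only $(\rm i)$; here $r=t$ and $R=1-t$.

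First I would apply Lemma~\ref{002}$(\rm i)$ to each pair $A_i,B_i$, rewrite $2(1-t)(A_i\nabla B_i-A_i\sharp B_i)=R(A_i+B_i-2A_i\sharp B_i)$, and sum over $i=1,\dots,n$, which gives
{\footnotesize\begin{align*}
\left(\sum_{i=1}^{n}A_i\right)\nabla_t\left(\sum_{i=1}^{n}B_i\right)
&\leq \sum_{i=1}^{n}A_i\sharp_t B_i
+R\left(\sum_{i=1}^{n}A_i+\sum_{i=1}^{n}B_i-2\sum_{i=1}^{n}A_i\sharp B_i\right)\\
&\quad-r_0\left(\sum_{i=1}^{n}A_i\sharp B_i+\sum_{i=1}^{n}B_i-2\sum_{i=1}^{n}A_i\sharp_{\frac34}B_i\right).
\end{align*}}
Then I would apply Lemma~\ref{001}$(\rm i)$ to $\sum_{i=1}^{n}A_i$ and $\sum_{i=1}^{n}B_i$; since $2t(X\nabla Y-X\sharp Y)=r(X+Y-2X\sharp Y)\geq0$ by the operator arithmetic-geometric mean inequality $X\sharp Y\leq X\nabla Y$, that summand can simply be discarded, which leaves
{\footnotesize\begin{align*}
\left(\sum_{i=1}^{n}A_i\right)\sharp_t\left(\sum_{i=1}^{n}B_i\right)
&\leq \left(\sum_{i=1}^{n}A_i\right)\nabla_t\left(\sum_{i=1}^{n}B_i\right)\\
&\quad-r_0\left(\sum_{i=1}^{n}A_i\sharp\sum_{i=1}^{n}B_i+\sum_{i=1}^{n}A_i-2\left(\sum_{i=1}^{n}A_i\sharp_{\frac14}\sum_{i=1}^{n}B_i\right)\right).
\end{align*}}
Finally, I would subtract $\sum_{i=1}^{n}A_i\sharp_t B_i$ from the second display and substitute the first display for $\left(\sum_{i=1}^{n}A_i\right)\nabla_t\left(\sum_{i=1}^{n}B_i\right)$; the three resulting error terms coincide term by term with the right-hand side of $(\rm i)$. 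Part $(\rm ii)$ is obtained along the same route, with Lemma~\ref{002}$(\rm ii)$ and Lemma~\ref{001}$(\rm ii)$ in place of their $(\rm i)$-versions and $R=t$, $r=1-t$.

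I do not anticipate a genuine difficulty: the argument is essentially a bookkeeping exercise. The two points that require a little care are (a) matching $R=\max\{t,1-t\}$ against the coefficients $2(1-t)$ and $2t$ occurring in Lemma~\ref{002}, so that the first error term emerges in the stated normalization, and (b) recognizing that the extra nonnegative summand thrown off by Lemma~\ref{001} in the middle step may be dropped --- which is precisely where the elementary bound $X+Y\geq 2X\sharp Y$ enters.
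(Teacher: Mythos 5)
Your argument is correct: applying Lemma~\ref{002}$(\rm i)$ to each pair and summing (using the linearity of $\nabla_t$ and $2(1-t)(X\nabla Y-X\sharp Y)=R(X+Y-2X\sharp Y)$ for $t\leq\frac12$), then applying Lemma~\ref{001}$(\rm i)$ to the aggregated pair and discarding the nonnegative summand $2t(X\nabla Y-X\sharp Y)$, produces exactly the three error terms in the statement, and part $(\rm ii)$ goes through symmetrically. The paper takes a different, more compressed route: it does not touch Lemmas~\ref{001} and~\ref{002} again but instead specializes its already-proved reverse Ando inequalities \eqref{i1} and \eqref{i2} by taking $A={\rm diag}(A_1,\dots,A_n)$, $B={\rm diag}(B_1,\dots,B_n)$ and the positive linear map $\Phi\left([C_{ij}]\right)=\sum_{i=1}^{n}C_{ii}$, under which $\Phi(A\sharp_t B)=\sum_i A_i\sharp_t B_i$ and $\Phi(A)\sharp_t\Phi(B)=\left(\sum_i A_i\right)\sharp_t\left(\sum_i B_i\right)$. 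The two proofs are at bottom the same sandwich around $\left(\sum_i A_i\right)\nabla_t\left(\sum_i B_i\right)$ --- summation over $i$ is precisely the action of that block-diagonal $\Phi$ --- but yours is self-contained and makes visible both why the leading coefficient is $R$ rather than $2R$ (the terms $\Phi(A)\sharp\Phi(B)$ in \eqref{i1} cancel identically) and exactly where a nonnegative term is thrown away, whereas the paper's is a one-line reduction that leans on the block-diagonal bookkeeping being checked silently.
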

\begin{proof}
Taking $A={\rm diag}(A_{1},\cdots,A_{n})$, $B={\rm diag}(B_{1},\cdots,B_{n})$ and $\Phi\left([C_{ij}]_{1\leq i,j\leq n}\right)=\sum_{i=1}^{n}C_{ii}$ in equalities \eqref{i1} and \eqref{i2}, we get the desired inequality.
\end{proof}
Since the function $f(x)=x^t\,\,(t\in[0,1])$ is an operator concave function,
$\sum_{i=1}^{n}w_{i}T_{i}^{t}\leq\Big(\sum_{i=1}^{n}w_{i}T_{i}\Big)^{t}$ for positive  operators $T_i$ and  positive real numbers $w_{i}$ such that $\sum_{i=1}^{n}w_{i}=1$. Now,  Theorem \ref{TH} yields a reverse of this inequality as follows:
\begin{example}
If for positive  operators $T_i\,\,(1\leq i \leq n)$, we take $A_i=w_{i}I$ and $B_i=w_{i}T_i\,\,(1\leq i \leq n)$, in Theorem \ref{TH}, where $w_{i}$'s are positive real numbers such that $\sum_{i=1}^{n}w_{i}=1$, we reach the following inequalities:\\
$(\rm i)$ If $0\leq t\leq\frac{1}{2}$, then
\begin{align*}
\Big(\sum_{i=1}^{n}w_{i}T_{i}\Big)^{t}-\sum_{i=1}^{n}w_{i}T_{i}^{t}\leq &R\Big(I+\sum_{i=1}^{n}w_{i}T_{i}-2\sum_{i=1}^{n}w_{i}T_{i}^{1/2}\Big)\\&-r_{0}\Big(\sum_{i=1}^{n}w_{i}T_{i}^{1/2}+
\sum_{i=1}^{n}w_{i}T_{i}-2\sum_{i=1}^{n}w_{i}T_{i}^{3/4}\Big)\\&-r_{0}\Big((\sum_{i=1}^{n}w_{i}T_{i})^{1/2}+I-2(\sum_{i=1}^{n}w_{i}T_{i})^{1/4}\Big).
\end{align*}
$(\rm ii)$ If $\frac{1}{2}<t\leq1$, then
\begin{align*}
\Big(\sum_{i=1}^{n}w_{i}T_{i}\Big)^{t}-\sum_{i=1}^{n}w_{i}T_{i}^{t}\leq &R\Big(I+\sum_{i=1}^{n}w_{i}T_{i}-2\sum_{i=1}^{n}w_{i}T_{i}^{1/2}\Big)\\&-r_{0}\Big((\sum_{i=1}^{n}w_{i}T_{i})^{1/2}+
\sum_{i=1}^{n}w_{i}T_{i}-2(\sum_{i=1}^{n}w_{i}T_{i})^{3/4}\Big)\\&-r_{0}\Big(\sum_{i=1}^{n}w_{i}T_{i}^{1/2}+I-2\sum_{i=1}^{n}w_{i}T_{i}^{1/4}\Big).
\end{align*}
\end{example}
In \cite{ya}, the Tsallis relative operator entropy $T_{t}(A|B)$ for positive invertible operators $A$, $B$ and $0<t\leq1$ is defined as follows:
\begin{align*}
T_{t}(A,B)=\frac{A\sharp_{t}B-A}{t}.
\end{align*}
For further information about  the Tsallis relative operator entropy see \cite{fuj3} and references therein. In \cite[Proposition 2.3]{Fur}, it is shown that for any unital positive linear map $\Phi$ the following inequality  holds:
\begin{align}\label{tsa1}
\Phi(T_{t}(A|B))\leq T_{t}(\Phi(A)|\Phi(B)).
\end{align}
 In \eqref{tsa1}, by similar techniques of Theorem \ref{TH}, for  positive operators $A_{i}, B_{i}$ $(i=1,2,...,n)$, we have
\begin{align}\label{tsa}
\sum_{i=1}^{n}\left(T_{t}(A_{i}|B_{i})\right)\leq T_{t}\left(\sum_{i=1}^{n}A_{i}|\sum_{i=1}^{n}B_{i}\right).
\end{align}
In the next theorem we show a reverse of  inequality \eqref{tsa}.
\begin{theorem}
Let $A_i, B_i\in\mathcal{B}(\mathcal{H})\,\,(1\leq i\leq n)$ be positive invertible  and $t\in (0,1)$. \\
$(\rm i)$ If $0<t\leq\frac{1}{2}$, then
{\footnotesize\begin{align*}
&T_{t}\left(\sum_{i=1}^{n}A_{i}|\sum_{i=1}^{n}B_{i}\right)-\sum_{i=1}^{n}(T_{t}(A_{i}|B_{i}))\\&
\leq\frac{1}{t}\Bigg[R\Big(\sum_{i=1}^{n}A_{i}+\sum_{i=1}^{n}B_{i}-2\sum_{i=1}^{n}(A_{i}\sharp B_{i})\Big)-r_{0}\Big(\sum_{i=1}^{n}(A_{i}\sharp B_{i})+\sum_{i=1}^{n}B_{i}-2\sum_{i=1}^{n}(A_{i}\sharp_{\frac{3}{4}}B_{i})\Big)
\\&-r_{0}\Big(\sum_{i=1}^{n}A_{i}\sharp\sum_{i=1}^{n}B_{i}+\sum_{i=1}^{n}A_{i}-2(\sum_{i=1}^{n}A_{i}\sharp_{\frac{1}{4}}\sum_{i=1}^{n}B_{i})\Big)\Bigg].
\end{align*}}
$(\rm ii)$ If $\frac{1}{2}<t<1$, then
{\footnotesize\begin{align*}
T_{t}\left(\sum_{i=1}^{n}A_{i}|\sum_{i=1}^{n}B_{i}\right)&-\sum_{i=1}^{n}(T_{t}(A_{i}|B_{i}))\leq\frac{1}{t}\Bigg[R\Big(\sum_{i=1}^{n}A_{i}+\sum_{i=1}^{n}B_{i}-2(\sum_{i=1}^{n}A_{i}\sharp B_{i})\Big)\\&-r_{0}\Big((\sum_{i=1}^{n}A_{i})\sharp (\sum_{i=1}^{n}B_{i})+\sum_{i=1}^{n}B_{i}-2(\sum_{i=1}^{n}A_{i}\sharp_{\frac{3}{4}}\sum_{i=1}^{n}B_{i})\Big)\\&
-r_{0}\Big(\sum_{i=1}^{n}(A_{i}\sharp B_{i})+\sum_{i=1}^{n}A_{i}-2\sum_{i=1}^{n}(A_{i}\sharp_{\frac{1}{4}} B_{i})\Big)\Bigg].
\end{align*}}
\end{theorem}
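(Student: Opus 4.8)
The plan is to observe that the asserted bounds are nothing more than Theorem \ref{TH} rescaled by $1/t$, once one unwinds the definition of the Tsallis relative operator entropy. Recall that $T_{t}(A|B)=\dfrac{A\sharp_{t}B-A}{t}$ for positive invertible $A,B$ and $0<t\le 1$. Applying this with $A=\sum_{i=1}^{n}A_{i}$, $B=\sum_{i=1}^{n}B_{i}$ on the one hand, and summing the defining identity over $i$ on the other, I would compute
\begin{align*}
T_{t}\!\left(\sum_{i=1}^{n}A_{i}\Big|\sum_{i=1}^{n}B_{i}\right)-\sum_{i=1}^{n}T_{t}(A_{i}|B_{i})
&=\frac{1}{t}\!\left[\left(\sum_{i=1}^{n}A_{i}\right)\!\sharp_{t}\!\left(\sum_{i=1}^{n}B_{i}\right)-\sum_{i=1}^{n}A_{i}\right]
-\frac{1}{t}\sum_{i=1}^{n}\bigl(A_{i}\sharp_{t}B_{i}-A_{i}\bigr)\\
&=\frac{1}{t}\!\left[\left(\sum_{i=1}^{n}A_{i}\right)\!\sharp_{t}\!\left(\sum_{i=1}^{n}B_{i}\right)-\sum_{i=1}^{n}A_{i}\sharp_{t}B_{i}\right],
\end{align*}
since the $\sum_{i=1}^{n}A_{i}$ terms cancel. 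Thus the left-hand side equals exactly $\tfrac1t$ times the quantity that Theorem \ref{TH} estimates.

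With this reduction in hand the proof finishes immediately. For part $(\rm i)$, assume $0<t\le\frac12$; multiply the inequality of Theorem \ref{TH}(i) by the positive scalar $\frac1t$ (which preserves the operator order because $\frac1t>0$) to obtain precisely the claimed bound. For part $(\rm ii)$, assume $\frac12<t<1$ and do the same with Theorem \ref{TH}(ii). The parameters $r=\min\{t,1-t\}$, $R=\max\{t,1-t\}$ and $r_{0}=\min\{2r,1-2r\}$ carry over verbatim.

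The only thing worth a word of care is that one should record at the outset why inequality \eqref{tsa} — the "forward" direction being reversed — holds for sums: exactly as in the proof of Theorem \ref{TH}, one applies \eqref{tsa1} to the block-diagonal operators $A=\mathrm{diag}(A_{1},\dots,A_{n})$, $B=\mathrm{diag}(B_{1},\dots,B_{n})$ and the (unital, after normalization) positive linear map $\Phi([C_{ij}])=\sum_{i}C_{ii}$. But since \eqref{tsa} is already stated in the excerpt and is not needed to prove the displayed reverse estimate, there is in fact no genuine obstacle here: the whole argument is a one-line algebraic identity followed by an invocation of Theorem \ref{TH}. If anything, the "hard part" is purely bookkeeping — making sure the $\frac34$- and $\frac14$-weighted geometric mean terms in the two cases are transcribed from Theorem \ref{TH} without sign or index errors.
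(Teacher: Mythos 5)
Your proposal is correct and coincides with the paper's own proof: both unwind the definition of $T_{t}$, cancel the $\sum_{i=1}^{n}A_{i}$ terms to reduce the left-hand side to $\tfrac{1}{t}\bigl[\bigl(\sum_{i=1}^{n}A_{i}\bigr)\sharp_{t}\bigl(\sum_{i=1}^{n}B_{i}\bigr)-\sum_{i=1}^{n}A_{i}\sharp_{t}B_{i}\bigr]$, and then invoke Theorem \ref{TH} scaled by the positive factor $\tfrac{1}{t}$.
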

\begin{proof}
 Applying Theorem \ref{TH} for $0<t\leq\frac{1}{2}$, we have
{\footnotesize\begin{align*}
&T_{t}\left(\sum_{i=1}^{n}A_{i}|\sum_{i=1}^{n}B_{i}\right)-\sum_{i=1}^{n}(T_{t}(A_{i}|B_{i}))\\&=\frac{\left(\sum_{i=1}^{n}A_{i}\right)\sharp_{t}\left(\sum_{i=1}^{n}B_{i}\right)-\sum_{i=1}^{n}A_{i}}{t}-\sum_{i=1}^{n}\frac{A_{i}\sharp_{t}B_{i}-A_{i}}{t}
\\&\leq\frac{1}{t}\Bigg[R\Big(\sum_{i=1}^{n}A_{i}+\sum_{i=1}^{n}B_{i}-2\sum_{i=1}^{n}(A_{i}\sharp B_{i})\Big)-r_{0}\Big(\sum_{i=1}^{n}(A_{i}\sharp B_{i})+\sum_{i=1}^{n}B_{i}-2\sum_{i=1}^{n}(A_{i}\sharp_{\frac{3}{4}}B_{i})\Big)
\\&-r_{0}\Big(\sum_{i=1}^{n}A_{i}\sharp\sum_{i=1}^{n}B_{i}+\sum_{i=1}^{n}A_{i}-2(\sum_{i=1}^{n}A_{i}\sharp_{\frac{1}{4}}\sum_{i=1}^{n}B_{i})\Big)\Bigg],
\end{align*}}
whence we get the first inequality. The proof of the second inequality is similar.
\end{proof}
\begin{remark}
We can express our results for non-invertible operators; see \cite{fuj3}.
It is a direct consequence of the definition of the mean in the sense of Kubo-Ando \cite{mm} that
$A\sharp_t (B+\varepsilon)$ is a monotone increasing net.
Let $B$ be a non-invertible operator and $ \epsilon>0$.  It follows from the set $\{A\sharp_{t}(B+\epsilon): \epsilon>0\}$ is bounded above for $0 < \varepsilon <1$ that the limit \begin{align}\label{inv}
A\sharp_{t}B=\lim_{\epsilon\downarrow0}A\sharp_{t}(B+\epsilon)
\end{align}
exists in the strong operator topology. So by \eqref{inv}, $A\sharp_{t}B$ exists.\\
\end{remark}

\section*{Competing interests}
  The authors declare that they have no competing interests.
\section*{Author's contributions}
   The authors contributed equally to the manuscript and read and approved the final manuscript.
\section*{Acknowledgement.}
The  third  author   would  like  to  thank  the Tusi Mathematical Research Group (TMRG).

\bibliographystyle{amsplain}

\end{document}